\documentclass[10pt]{amsart}
\usepackage{amssymb}
\usepackage{amscd}
\theoremstyle{plain}
\newtheorem{theorem}{Theorem}
\newtheorem*{mainthm}{Main Theorem}
\theoremstyle{definition}

\newtheorem{proposition}{Proposition}[section]

\newtheorem{corollary}[proposition]{Corollary}
\newtheorem{lemma}[proposition]{Lemma}
\newtheorem{definition}[proposition]{Definition}
\theoremstyle{remark}

\DeclareMathOperator{\stat}{stat}

\DeclareMathOperator{\nacc}{nacc}
\DeclareMathOperator{\Sk}{Sk}

\DeclareMathOperator{\otp}{otp}
\DeclareMathOperator{\Ch}{Ch}

\DeclareMathOperator{\ran}{ran}

\DeclareMathOperator{\cf}{cf}

\DeclareMathOperator{\tr}{Tr}

\newcommand{\sk}{\vskip.05in}

\newcommand{\restr}{\upharpoonright}

\newcommand{\subs}{\subseteq}

\numberwithin{equation}{section}
\begin{document}
\title{Getting more colors}
\author{Todd Eisworth}
\address{Department of Mathematics\\
         Ohio University\\
         Athens, OH 45701}
\email{eisworth@math.ohiou.edu}
 \keywords{}
 \subjclass{}
\date{\today}
\begin{abstract}
We establish a coloring theorem for successors of a singular cardinals, and use it prove that for any such cardinal $\mu$, we have $\mu^+\nrightarrow[\mu^+]^2_{\mu^+}$ if and only if $\mu^+\nrightarrow[\mu^+]^2_{\theta}$ for arbitrarily large $\theta<\mu$.
\end{abstract}

\maketitle

\section{Introduction}

Our aim in this note is to prove a type of ``negative stepping-up theorem'' for square-brackets partition relations at
successors of singular cardinals.  In order to state our results precisely, we need to recall the following bit of notation due originally to Erd\H{o}s, Hajnal, and Rado~\cite{EHR}:

\begin{definition}
 $\kappa\rightarrow [\lambda]^\mu_\theta$  means that for any function $F:[\kappa]^{\mu}\rightarrow\theta$,
(to which we refer as a {\em coloring}) we can find a set $H\subs\kappa$ of cardinality $\lambda$ for which
\begin{equation*}
\ran(F\restr [H]^\mu)\subsetneqq\theta.
\end{equation*}
\end{definition}

The negation of a square-brackets partition relation asserts the existence of a coloring which exhibits complicated behavior on every large subset of the domain. We will be concerned with relations of the form $\kappa\nrightarrow[\kappa]^2_\theta$, which states that one may color the pairs of ordinals from $\kappa$ with $\theta$ colors in such a way that $f\restr[A]^2$ assumes all colors for any set $A\in[\kappa]^\kappa$. (We will usually identify $[\kappa]^2$ with those pairs $\langle\alpha,\beta\rangle\in\kappa\times\kappa$ with $\alpha<\beta$.) We refer the reader to Chapter~XI of~\cite{ehmr} for a more comprehensive introduction to square-brackets partition relations and their negations.

We mentioned in the opening sentence that we aim to prove a sort of negative stepping-up theorem.  The terminology ``negative stepping-up theorem'' usually refers to results which increase the cardinal appearing on the left side of a given negative partition relation. This is not quite what we are after --- we assume the existence of certain colorings on a cardinal~$\kappa$ and prove that the number of colors can automatically be upgraded while keeping the ``domain'' $\kappa$ fixed.  The following simple proposition provides our motivation:

\begin{proposition}
\label{prop1}
The following two statements are equivalent for a cardinal $\mu$:
\begin{enumerate}
\sk
\item $\mu^+\nrightarrow [\mu^+]^2_{\mu^+}$
\sk
\item $\mu^+\nrightarrow [\mu^+]^2_{\mu}$
\sk
\end{enumerate}
\end{proposition}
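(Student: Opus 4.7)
The direction $(1)\Rightarrow(2)$ is routine: composing any witness $G:[\mu^+]^2\to\mu^+$ of (1) with a surjection $\mu^+\to\mu$ produces a witness of (2), since the image of $G$ on $[H]^2$ is all of $\mu^+$ for every $H\in[\mu^+]^{\mu^+}$, so its pushforward to $\mu$ is surjective.

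The substance lies in $(2)\Rightarrow(1)$. My plan is to use the $\mu^+$ ordinals below $\mu^+$ as labels to ``spread out'' a single $F$-color into $\mu^+$ distinct $G$-colors. Starting from a witness $F:[\mu^+]^2\to\mu$ of (2), I would fix for each $\beta\in[\mu,\mu^+)$ a surjection $\pi_\beta:\mu\to\beta$ (available because $|\beta|\le\mu$) and set $G(\alpha,\beta)=\pi_\beta(F(\alpha,\beta))$, defining $G$ arbitrarily on the few pairs with small $\beta$. The resulting $G$ maps $[\mu^+]^2$ into $\mu^+$, and the remaining task is to verify that it attains every color on $[H]^2$ for every $H\in[\mu^+]^{\mu^+}$.

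To do so, given such an $H$ and a target $\delta<\mu^+$, I would trim to $H'=H\setminus(\delta+1)$ and, for each $\beta\in H'$, pick $c_\beta$ from the nonempty set $\pi_\beta^{-1}(\delta)\subs\mu$. Because $|H'|=\mu^+$ and $\mu^+$ is regular, pigeonhole yields a single $c^*<\mu$ with $H'':=\{\beta\in H':c_\beta=c^*\}$ still of size $\mu^+$. Applying the hypothesis on $F$ to $H''$ then produces $\alpha<\beta$ in $H''$ with $F(\alpha,\beta)=c^*$, and by construction $G(\alpha,\beta)=\pi_\beta(c^*)=\pi_\beta(c_\beta)=\delta$.

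The only non-bookkeeping step is the pigeonhole locating $c^*$, and I anticipate no real obstacle. The conceptual content is that each surjection $\pi_\beta$ lets a single color of $F$ serve as a witness for $\mu^+$ distinct colors of $G$ as $\beta$ varies over the ``upper'' coordinate, which is exactly the mechanism needed to promote richness in $\mu$ colors to richness in $\mu^+$ colors.
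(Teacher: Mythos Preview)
Your proof is correct and follows essentially the same approach as the paper's: define the new coloring by $G(\alpha,\beta)=\pi_\beta(F(\alpha,\beta))$ using surjections $\pi_\beta:\mu\twoheadrightarrow\beta$, then given a target color~$\delta$ pigeonhole to a large subset on which a single $F$-color codes~$\delta$. One cosmetic point: the paper defines the surjections for \emph{all} $\beta<\mu^+$ rather than only $\beta\geq\mu$, which spares it the case split; in your version you should trim to $H\setminus\max(\mu,\delta+1)$ rather than $H\setminus(\delta+1)$ so that $\pi_\beta$ is actually defined for every $\beta\in H'$.
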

\begin{proof}
It is clear that (1) implies (2), so assume we have a function $c:[\mu^+]^2\rightarrow\mu$ witnessing that $\mu^+\nrightarrow [\mu^+]^2_{\mu}$.  For each $\beta<\mu^+$, fix a function $g_\beta$ mapping $\mu$ onto $\beta$, and define
\begin{equation}
c^*(\alpha,\beta)= g_\beta(c(\alpha,\beta)).
\end{equation}
We will show that $c^*$ serves as a witness for $\mu^+\nrightarrow [\mu^+]^2_{\mu^+}$.

To see this, suppose $A\subs\mu^+$ is of size $\mu^+$, and let $\varsigma<\mu^+$ be arbitrary. Our goal is to produce $\alpha<\beta$ in $A$ for which $c^*(\alpha,\beta)=\varsigma$, so without loss of generality we may assume that $\varsigma<\min(A)$.

Given $\epsilon<\mu$, define
\begin{equation}
A_\epsilon:=\{\beta\in A: g_\beta(\epsilon)=\varsigma\}.
\end{equation}
Since $\varsigma<\min(A)$, it follows that $A=\bigcup_{\epsilon<\mu}A_\epsilon$. In particular, we can choose a single
$\epsilon<\mu$ for which $A_\epsilon$ has size $\mu^+$.  It follows that we can find $\alpha<\beta$ in $A_\epsilon$ (hence in~$A$) for which $c(\alpha,\beta)=\epsilon$, and so
\begin{equation}
c^*(\alpha,\beta)=g_\beta(c(\alpha,\beta))=g_\beta(\epsilon)=\varsigma,
\end{equation}
where the last equality holds because $\beta\in A_\epsilon$.
\end{proof}

This simple little argument applies in many other situations. For example, one easily obtains by the same method
the equivalence of $\mu^+\nrightarrow[\mu^+]^n_{\mu^+}$ and $\mu^+\nrightarrow[\mu^+]^n_{\mu}$ for any finite~$n$.

These results, however, are of no interest in the case where $\mu$ is a regular cardinal, as a celebrated result of Todor{\v{c}}evi{\'c}~\cite{acta} establishes  $\mu^+\nrightarrow[\mu^+]^2_{\mu^+}$ always holds when $\mu$ is a regular cardinal. On the other hand, the case where $\mu$ is singular is a much different story because it is still unknown whether $\mu^+\nrightarrow[\mu^+]^2_{\mu^+}$ (or even the much weaker $\mu^+\nrightarrow[\mu^+]^{<\omega}_{\mu^+}$) must hold.
If we assume that $\mu$ is singular, then there is a natural way to attempt to strengthen Proposition~\ref{prop1}  --- one may ask if $\mu^+\rightarrow[\mu^+]^2_{\mu^+}$ follows only by assuming that $\mu^+\nrightarrow[\mu^+]^2_\theta$ holds for arbitrarily large $\theta<\mu$. Experience suggests that the answer should be yes, and that the result should follow by one of the standard ``patching arguments'' common in this area of set theory.  Unfortunately, a naive attempt at this yields only the following weak result:

\begin{proposition}
\label{weakprop}
Suppose $\mu$ is a singular cardinal.  If $\mu^+\nrightarrow [\mu^+]^2_\theta$ for arbitrarily large $\theta<\mu$, then
\begin{equation*}
\mu^+\nrightarrow[\mu^+]^4_{\mu^+}.
\end{equation*}
\end{proposition}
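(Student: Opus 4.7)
The plan is to combine the step-up trick from Proposition~\ref{prop1} with an auxiliary coloring whose role is to let the bottom pair of a $4$-tuple select which of a family of $2$-colorings to apply to the top pair.

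First a preliminary observation: if $c:[\mu^+]^2\to\theta$ witnesses $\mu^+\nrightarrow[\mu^+]^2_\theta$ and $\lambda\le\theta$, then composing $c$ with any surjection of $\theta$ onto $\lambda$ still produces, on every set of size $\mu^+$, a coloring that attains every element of $\lambda$. Hence the hypothesis gives us witnesses $c_\nu:[\mu^+]^2\to\nu$ for \emph{every} cardinal $\nu<\mu$. Choose an increasing sequence $\langle\mu_i:i<\cf(\mu)\rangle$ cofinal in $\mu$, let $c_*:[\mu^+]^2\to\cf(\mu)$ and $c_i:[\mu^+]^2\to\mu_i$ (for $i<\cf(\mu)$) be such witnesses, and for each $\beta<\mu^+$ fix a surjection $g_\beta:\mu\to\beta$ exactly as in Proposition~\ref{prop1}. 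Define
\[
d(\alpha,\beta,\gamma,\delta)=g_\delta\bigl(c_{c_*(\alpha,\beta)}(\gamma,\delta)\bigr)
\]
for $\alpha<\beta<\gamma<\delta$ from $\mu^+$. Note $c_*(\alpha,\beta)<\cf(\mu)$, so $c_{c_*(\alpha,\beta)}$ is defined, and $c_i(\gamma,\delta)<\mu_i<\mu$ is in the domain of $g_\delta$.

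To verify that $d$ witnesses $\mu^+\nrightarrow[\mu^+]^4_{\mu^+}$, fix $A\in[\mu^+]^{\mu^+}$ and a target $\zeta<\mu^+$, and without loss of generality assume $\zeta<\min(A)$. Mimicking Proposition~\ref{prop1}, put $A_\epsilon=\{\delta\in A:g_\delta(\epsilon)=\zeta\}$; since $\zeta<\min(A)$, we have $A=\bigcup_{\epsilon<\mu}A_\epsilon$, so some $\epsilon^*<\mu$ yields $|A_{\epsilon^*}|=\mu^+$. Pick $i^*<\cf(\mu)$ with $\mu_{i^*}>\epsilon^*$. Because $c_*$ realises every color of $\cf(\mu)$ on $[A]^2$, we can choose $\alpha<\beta$ in $A$ with $c_*(\alpha,\beta)=i^*$. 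Next, $A_{\epsilon^*}\setminus(\beta+1)$ still has cardinality $\mu^+$, so the witnessing property of $c_{i^*}$ gives $\gamma<\delta$ in that set with $c_{i^*}(\gamma,\delta)=\epsilon^*$. Then $\alpha<\beta<\gamma<\delta$ all lie in $A$ and
\[
d(\alpha,\beta,\gamma,\delta)=g_\delta\bigl(c_{i^*}(\gamma,\delta)\bigr)=g_\delta(\epsilon^*)=\zeta,
\]
as required.

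There is no real obstacle in this argument: the design choice that makes it go through is that the \emph{lower} pair $(\alpha,\beta)$ only has to realise a single color of $c_*$ somewhere in $A$ (automatic from $|A|=\mu^+$), and the bounded ``damage'' this does---restricting attention thereafter to $A_{\epsilon^*}\setminus(\beta+1)$---is harmless because removing a bounded initial segment from a set of size $\mu^+$ leaves a set of size $\mu^+$ on which the witness $c_{i^*}$ still attains every color. The price paid for this comfortable slack is exactly the loss of two coordinates to the auxiliary selection, which is why the naive patching only yields exponent $4$ and motivates the more delicate work of the main theorem.
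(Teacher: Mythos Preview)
Your argument is correct and follows essentially the same patching idea as the paper: use a $\cf(\mu)$-coloring on one pair of the quadruple to select which $\theta_i$-coloring to apply to the other pair. The only cosmetic differences are that you let the \emph{lower} pair do the selecting (the paper uses the upper pair) and you fold the step-up from $\mu$ to $\mu^+$ colors directly into the definition via $g_\delta$, whereas the paper first reduces to $\mu$ colors and appeals to the analogue of Proposition~\ref{prop1}.
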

\begin{proof}
We give only a sketch. Note the same argument given in Proposition~1 tells us it suffices to establish
\begin{equation*}
 \mu^+\nrightarrow[\mu^+]^4_\mu.
\end{equation*}

Let $\langle\theta_i:i<\cf(\mu)\rangle$ be an increasing sequence of cardinals unbounded in~$\mu$.  For each $i<\cf(\mu)$, let $d_i$ be a coloring witnessing $\mu^+\nrightarrow[\mu^+]^2_{\theta_i}$ (note that our assumptions imply such colorings exist for {\em every} $\theta<\mu$). Now $\mu^+\nrightarrow[\mu^+]^2_{\cf(\mu)}$ (a result of Shelah --- see Conclusion~4.1 on page~67 of~\cite{cardarith}), so we can fix a coloring $c$ witnessing this.

We now use $c$ to patch together the colorings $d_i$, that is, we define a function $f$ on $[\mu^+]^4$ by setting
\begin{equation}
f(\alpha,\beta,\gamma,\delta)= d_{c(\gamma,\delta)}(\alpha,\beta).
\end{equation}

Given $\varsigma<\mu$ and unbounded $A\subs \mu^+$, we must find $\alpha<\beta<\gamma<\delta$ in $A$ for which
\begin{equation*}
f(\alpha,\beta,\gamma,\delta)=\varsigma.
\end{equation*}
This is, however, quite straightforward and the result follows.
\end{proof}

The rest of this paper is essentially concerned with turning the ``4'' in Proposition~\ref{weakprop} into a~``2''. We accomplish this by proving the following theorem (in {\sf ZFC}) from which the desired result can be deduced as an easy corollary.

\begin{mainthm}
Suppose $\mu$ is a singular cardinal. There is a function
 \begin{equation}
 D:[\mu^+]^2\rightarrow [\mu^+]^2\times\cf(\mu)
 \end{equation}
 such that for any unbounded $A\subs\mu^+$, there is a stationary $S\subs\mu^+$ such that
 \begin{equation}
 [S]^2\times\cf(\mu)\subs\ran(D\restr [A]^2).
 \end{equation}
\end{mainthm}

\section{Background material}
\noindent{\sf Minimal Walks}

\smallskip

Recall that $\bar{e}=\langle e_\alpha:\alpha<\lambda\rangle$ is a $C$-sequence for the cardinal
$\lambda$ if $e_\alpha$ is closed unbounded in $\alpha$ for each $\alpha<\lambda$.
Following Todor{\v{c}}evi{\'c}, given $\alpha<\beta<\lambda$ the {\em minimal walk from $\beta$ to $\alpha$ along $\bar{e}$}
is defined to be the sequence $\beta=\beta_0>\dots>\beta_{n}=\alpha$ obtained by setting
\begin{equation}
\beta_{i+1}=\min(e_{\beta_i}\setminus\alpha).
\end{equation}
The {\em trace} of the walk from $\beta$ to $\alpha$ is defined by
\begin{equation}
\tr(\alpha,\beta)=\{\beta=\beta_0>\beta_1>\dots>\beta_n=\alpha\},
\end{equation}
that is, $\tr(\alpha,\beta)$ is the set of all ordinals visited on the walk from $\beta$ down to $\alpha$ along~$\bar{e}$.

There are other standard parameters associated with minimal walks that are quite important for our purposes.  For example, we need the function $\rho_2:[\lambda]^2\rightarrow\omega$ giving the length of the walk from $\beta$ to $\alpha$, that is,
\begin{equation}
\rho_2(\alpha,\beta)=\text{ least $i$ for which $\beta_i(\alpha,\beta)=\alpha$}.
\end{equation}
For $i\leq\rho_2(\alpha,\beta)$, we set
\begin{equation*}
\beta_i^-(\alpha,\beta)=
\begin{cases}
0 &\text{if $i=0$},\\
\sup(e_{\beta_j(\alpha,\beta)}\cap\alpha) &\text{if $i=j+1$ for $j<\rho_2(\alpha,\beta)$}.
\end{cases}
\end{equation*}
Note that if $0<i<\rho_2(\alpha,\beta)$ then
\begin{itemize}
\item  $\beta_i^-(\alpha,\beta)=\max(e_{\beta_{i-1}(\alpha,\beta)}\cap\alpha)$ (as opposed to ``sup''),
\sk
\item $\beta_i^-(\alpha,\beta)<\alpha<\beta_i(\alpha,\beta)$, and
\sk
\item $\beta_i(\alpha,\beta)=\min(e_{\beta_{i-1}(\alpha,\beta)}\setminus \beta_i^-(\alpha,\beta)+1)$.
\sk
\end{itemize}
Thus, for $0<i<\rho_2(\alpha,\beta)$, the ordinals $\beta^-_i(\alpha,\beta)$ and $\beta_i(\alpha,\beta)$ are the two consecutive elements in $e_{\beta_{i-1}(\alpha,\beta)}$ which ``bracket''~$\alpha$.

For our purposes, we need to analyze what happens in the case where $i=\rho_2(\alpha,\beta)$. In this situation,  we have
\begin{itemize}
\sk
\item $\beta_{\rho_2(\alpha,\beta)}^-(\alpha,\beta)\leq\alpha=\beta_{\rho_2(\alpha,\beta)}(\alpha,\beta)$, and
\sk
\item $\beta^-_{\rho_2(\alpha,\beta)}(\alpha,\beta)<\alpha$ if and only if $\alpha\in\nacc(e_{\beta_{\rho_2(\alpha,\beta)-1}(\alpha,\beta)})$.
\end{itemize}
Notice that $\alpha$ must be an element of  $e_{\beta_{\rho_2(\alpha,\beta)-1}(\alpha,\beta)}$ by definition, and  $\beta^-_{\rho_2(\alpha,\beta)}(\alpha,\beta)$ is less than $\alpha$ precisely when $\alpha$ fails to be an accumulation point of $e_{\beta_{\rho_2(\alpha,\beta)-1}(\alpha,\beta)}$.

We are going to make use of some standard patterns of argument using minimal walks, and this is going to require a couple of more bits of notation. To wit, we define
\begin{gather}
\gamma(\alpha,\beta)=\beta_{\rho_2(\alpha,\beta)-1}(\alpha,\beta),\\
\gamma^-(\alpha,\beta)=\max\{\beta_i^-(\alpha,\beta):i<\rho_2(\alpha,\beta)\},
\intertext{and}
\label{etadef}
\eta(\alpha,\beta)=\max\{\beta_i^-(\alpha,\beta):i\leq\rho_2(\alpha,\beta)\}.
\end{gather}
The following proposition captures some standard facts about minimal walks; the proof is an easy induction.

\begin{proposition}
\label{minimalprop}
Suppose $\alpha<\beta$.
\begin{enumerate}
\item $\gamma^-(\alpha,\beta)<\alpha$, and if $\gamma^-(\alpha,\beta)<\alpha^*\leq\alpha$ then
\begin{equation}
\beta_i(\alpha,\beta)=\beta_i(\alpha^*,\beta)\text{ for $i<\rho_2(\alpha,\beta)$}.
\end{equation}
\sk
\item $\eta(\alpha,\beta)\leq\alpha$, and if it happens that $\eta(\alpha,\beta)<\alpha^*\leq\alpha$, then
\begin{equation}
\beta_i(\alpha,\beta)=\beta_i(\alpha^*,\beta)\text{ for $i\leq\rho_2(\alpha,\beta)$}.
\end{equation}
In particular,
\begin{equation}
\beta_{\rho_2(\alpha,\beta)}(\alpha^*,\beta)=\alpha.
\end{equation}
\end{enumerate}
\end{proposition}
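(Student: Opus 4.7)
The plan is to argue everything by a straightforward induction on the walking index~$i$, exploiting the fact that each step of the minimal walk is determined by the pair of consecutive elements of $e_{\beta_{i-1}}$ that brackets the target ordinal. The key observation is that if $\alpha^*$ and $\alpha$ lie in the same gap of $e_{\beta_{i-1}}$, then the walks to $\alpha^*$ and $\alpha$ take identical steps at stage~$i$.

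I begin with (1). First note that for $0<i<\rho_2(\alpha,\beta)$ the definition gives $\beta_i^-(\alpha,\beta)=\max(e_{\beta_{i-1}(\alpha,\beta)}\cap\alpha)<\alpha$, while $\beta_0^-(\alpha,\beta)=0<\alpha$. Taking the max over $i<\rho_2(\alpha,\beta)$ yields $\gamma^-(\alpha,\beta)<\alpha$. Now fix $\alpha^*$ with $\gamma^-(\alpha,\beta)<\alpha^*\leq\alpha$, and prove by induction on $i<\rho_2(\alpha,\beta)$ that $\beta_i(\alpha^*,\beta)=\beta_i(\alpha,\beta)$. The base case $i=0$ is trivial since both equal $\beta$. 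Assuming the equality at stage $i$ (with $i+1<\rho_2(\alpha,\beta)$), we have
\begin{equation*}
\beta_{i+1}(\alpha^*,\beta)=\min(e_{\beta_i(\alpha^*,\beta)}\setminus\alpha^*)=\min(e_{\beta_i(\alpha,\beta)}\setminus\alpha^*).
\end{equation*}
Because $i+1<\rho_2(\alpha,\beta)$, the ordinal $\beta_{i+1}^-(\alpha,\beta)=\max(e_{\beta_i(\alpha,\beta)}\cap\alpha)$ is at most $\gamma^-(\alpha,\beta)$, hence strictly less than $\alpha^*$. Therefore no element of $e_{\beta_i(\alpha,\beta)}$ lies in the interval $[\alpha^*,\alpha)$, and the preceding minimum coincides with $\min(e_{\beta_i(\alpha,\beta)}\setminus\alpha)=\beta_{i+1}(\alpha,\beta)$, completing the induction.

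For (2), I extend the analysis to $i=\rho_2(\alpha,\beta)$. At this final stage, $\beta_{\rho_2}^-(\alpha,\beta)=\sup(e_{\beta_{\rho_2-1}(\alpha,\beta)}\cap\alpha)\leq\alpha$, so $\eta(\alpha,\beta)\leq\alpha$. Suppose $\eta(\alpha,\beta)<\alpha^*\leq\alpha$. Since $\gamma^-(\alpha,\beta)\leq\eta(\alpha,\beta)<\alpha^*$, part~(1) already gives $\beta_i(\alpha^*,\beta)=\beta_i(\alpha,\beta)$ for $i<\rho_2(\alpha,\beta)$; in particular $\beta_{\rho_2-1}(\alpha^*,\beta)=\beta_{\rho_2-1}(\alpha,\beta)$. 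It remains to verify
\begin{equation*}
\beta_{\rho_2(\alpha,\beta)}(\alpha^*,\beta)=\min(e_{\beta_{\rho_2-1}(\alpha,\beta)}\setminus\alpha^*)=\alpha.
\end{equation*}
By the definition of the minimal walk, $\alpha\in e_{\beta_{\rho_2-1}(\alpha,\beta)}$, and the condition $\eta(\alpha,\beta)<\alpha^*$ forces $\sup(e_{\beta_{\rho_2-1}(\alpha,\beta)}\cap\alpha)<\alpha^*$, so no element of $e_{\beta_{\rho_2-1}(\alpha,\beta)}$ lies in $[\alpha^*,\alpha)$. Consequently $\alpha$ is indeed the required minimum, which establishes both the equality of walks for all $i\leq\rho_2(\alpha,\beta)$ and the ``in particular'' clause.

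The only delicate point is bookkeeping: one has to remember that the walk to $\alpha^*$ has length at least $\rho_2(\alpha,\beta)+1$ whenever $\alpha^*<\alpha$, so the symbols $\beta_i(\alpha^*,\beta)$ are well-defined throughout the induction, and one must keep the strict inequality ``$i+1<\rho_2(\alpha,\beta)$'' versus ``$i\leq\rho_2(\alpha,\beta)$'' correctly aligned so that the relevant $\beta_i^-$ is covered either by $\gamma^-$ or by $\eta$. Once this is in hand, the induction is routine, as advertised in the statement.
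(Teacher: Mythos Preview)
Your proof is correct and is precisely the ``easy induction'' to which the paper alludes; the paper itself gives no further details beyond that phrase, so there is nothing to compare against beyond noting that you have carried out exactly what was intended. The bookkeeping you flag in the final paragraph (well-definedness of $\beta_i(\alpha^*,\beta)$ along the way, and the alignment of the index bounds with $\gamma^-$ versus $\eta$) is handled correctly.
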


Note that part (2) of the above proposition is of no interest unless we can guarantee $\eta(\alpha,\beta)<\alpha$ (or equivalently, guarantee $\alpha\in\nacc(e_{\gamma(\alpha,\beta)})$); this will be one of our concerns in the sequel.

The content Proposition~\ref{minimalprop} is essentially the only property of minimal walks we need. A discussion of more sophisticated applications is beyond the scope of this paper.We refer the reader to~\cite{acta} or~\cite{stevobook} for more information.

We will, however, need one a generalization of the minimal walks machinery in order to handle some issues that arise when dealing with successors of singular cardinals of countable cofinality.  These techniques were introduced in~\cite{819}, and used again in~\cite{ideals}.

\begin{definition}
\label{generalizeddef}
Let $\lambda$ be a cardinal. A {\em generalized $C$-sequence} is a family
\begin{equation*}
\bar{e}=\langle e^m_\alpha:\alpha<\lambda, m<\omega\rangle
\end{equation*}
such that for each $\alpha<\lambda$ and $m<\omega$,
\begin{itemize}
\item $e^m_\alpha$ is closed unbounded in $\alpha$, and
\sk
\item $e^m_\alpha\subs e^{m+1}_\alpha$.
\sk
\end{itemize}
\end{definition}

One can think of a generalized $C$-sequence as a countable family of $C$-sequences which are increasing in a sense. One can also utilize generalized $C$-sequences in the context of minimal walks. In this paper, we do this in the simplest fashion --- given $m<\omega$ and $\alpha<\beta<\lambda$, we let the {\em $m$-walk from $\beta$ to $\alpha$ along $\bar{e}$} consist of the minimal walk from $\beta$ to $\alpha$ using the $C$-sequence $\langle e^m_\gamma:\gamma<\lambda\rangle$. Such walks have their associated parameters, and we use the superscript $m$ to indicate which part of the generalized $C$-sequence is being used in computations. So, for example, the $m$-walk from $\beta$ to $\alpha$ along $\bar{e}$ will have length $\rho_2^m(\alpha,\beta)$, and consist of ordinals denoted $\beta^m_i(\alpha,\beta)$ for $i\leq\rho^m_2(\alpha,\beta)$.

The requirement that $e^m_\alpha\subs e^{m+1}_\alpha$ is relevant for the following reason.  Given $\alpha<\beta$, we note that the sequence $\langle \min(e_\beta^m\setminus\alpha):m<\omega\rangle$ is non-increasing, and therefore eventually constant. From this it follows easily that the $m$-walk from $\beta$ to~$\alpha$  along $\bar{e}$ is exactly the same for all sufficiently large $m$.

\medskip

\noindent{\sf Club-guessing}

\medskip

Our arguments are going to make use of generalized $C$-sequences that have been carefully selected to interact with certain club-guessing sequences.  The type of club-guessing sequence we use depends on whether or not the cofinality of our singular cardinal~$\mu$ is uncountable, so we deal with each case separately. In both cases, we will be defining a stationary set $S$, a club-guessing sequence $\bar{C}$, and a generalized $C$-sequence $\bar{e}$.

If $\cf(\mu)>\aleph_0$, then we define
\begin{equation}
S:= S^{\mu^+}_{\cf(\mu)}=\{\delta<\mu^+:\cf(\delta)=\cf(\mu)\}.
\end{equation}
By Claim~2.6 on page~127 of~\cite{cardarith} (or see Theorem~2 of \cite{819}), there is a sequence $\langle C_\delta:\delta\in S\rangle$ such that
\begin{itemize}
\sk
\item $C_\delta$ is club in $\delta$,
\sk
\item $\otp(C_\delta)=\cf(\mu)$,
\sk
\item $\langle\cf(\alpha):\alpha\in\nacc(C_\delta)\rangle$ increases to $\mu$, and
\sk
\item whenever $E$ is club in~$\mu^+$, there are stationarily many $\delta\in S$ for which $C_\delta\subs E$.
\sk
\end{itemize}
Here ``$\nacc(C_\delta)$'' refers to the non-accumulation points of $C_\delta$, that is, those elements of $C_\delta$ that are not limits of points in $C_\delta$.

We now use the ``ladder swallowing'' trick (see Lemma~13 of~\cite{nsbpr}) to build a $C$-sequence $\langle e_\alpha:\alpha<\mu^+\rangle$ such that for each $\alpha<\mu^+$,
\begin{gather}
|e_\alpha|< \mu,\\
\intertext{and}
\delta\in S\cap e_\alpha\Longrightarrow C_\delta\subs e_\alpha.
\end{gather}
We then construct a ``silly'' generalized $C$-sequence $\bar{e}=\langle e^m_\alpha:m<\omega,\alpha<\mu^+\rangle$ by setting $e^m_\alpha = e_\alpha$ for all $m<\omega$.

In the case where $\mu$ is of countable cofinality, our definition of $S$, $\bar{C}$, and $\bar{e}$ is a little more involved as it is an open question whether one can find club-guessing sequences analogous to those above. Our argument will rely on technology developed in~\cite{ideals}.

In this case, we start by setting
\begin{equation}
S:= S^{\mu^+}_{\aleph_1}=\{\delta<\mu^+:\cf(\delta)=\aleph_1\},
\end{equation}
and assume $\langle \mu_i:i<\omega\rangle$ is an increasing sequence of uncountable cardinals cofinal in~$\mu$.

We are going to present a simplified version of the conclusion of Theorem~4 of~\cite{ideals}; the reader can consult that paper for a detailed proof (Proposition~5.8 is particularly relevant).

Thus, there is a sequence $\langle C_\delta:\delta\in S\rangle$ such that each $C_\delta$ is club in~$\delta$, and $C_\delta=\bigcup_{m<\omega}C_\delta[m]$ where
\begin{itemize}
\sk
\item $C_\delta[m]$ is club in $\delta$,
\sk
\item $|C_\delta[m]|\leq\mu_m^+$,
\sk
\end{itemize}
and such that for every club $E\subs\mu^+$, there are stationarily many $\delta\in S$ such that for each $m<\omega$, $\nacc(C_\delta[m])\cap E$ contains unboundedly many ordinals of cofinality greater than $\mu_m^+$.  (Note that the use of ``$\nacc$'' is redundant as the cofinality assumption guarantees such an ordinal cannot be a limit point of $C_\delta[m]$.)

Lemma~5.10 of~\cite{ideals} provides a generalized $C$-sequence $\bar{e}=\langle e_\alpha^m:\alpha<\mu^+,m<\omega\rangle$ such that
\begin{gather}
\label{2.14}|e^m_\alpha|\leq\cf(\alpha)+\mu_m^+\\
\intertext{and}
\label{2.15}\delta\in S\cap e_\alpha^m\Longrightarrow C_\delta[m]\subs e_\alpha^m.
\end{gather}

In either case, the phrase ``choose $\delta\in S$ such that $C_\delta$ guesses $E$'' should be given the obvious meaning.

\medskip

\noindent{\sf Scales}

\medskip

The next ingredient we need for our theorem is the concept of a scale for a singular cardinal.

\begin{definition}
Let $\mu$ be a singular cardinal. A {\em scale for
$\mu$} is a pair $(\vec{\mu},\vec{f})$ satisfying
\begin{enumerate}
\item $\vec{\mu}=\langle\mu_i:i<\cf(\mu)\rangle$ is an increasing sequence of regular cardinals
such that $\sup_{i<\cf(\mu)}\mu_i=\mu$ and $\cf(\mu)<\mu_0$.
\item $\vec{f}=\langle f_\alpha:\alpha<\mu^+\rangle$ is a sequence of functions such that
\begin{enumerate}
\item $f_\alpha\in\prod_{i<\cf(\mu)}\mu_i$.
\item If $\gamma<\delta<\beta$ then $f_\gamma<^* f_\beta$, where  the notation $f<^* g$  means that $\{i<\cf(\mu): g(i)\leq f(i)\}$ is bounded in $\cf(\mu)$.
\item If $f\in\prod_{i<\cf(\mu)}\mu_i$ then there is an $\alpha<\beta$ such that $f<^* f_\alpha$.
\end{enumerate}
\end{enumerate}
\end{definition}

It is an important theorem of Shelah~(see page Main Claim~1.3 on page~46 of~\cite{cardarith}) that scales exist for any singular $\mu$; readers seeking a gentler exposition of this and related topics can consult~\cite{cummings}, or~\cite{myhandbook}. If $\mu$ is singular and $(\vec{\mu},\vec{f})$ is a scale for $\mu$, then there is a
natural way to color the pairs of ordinals $\alpha<\beta<\mu^+$ using $\cf(\mu)$ colors, namely
\begin{equation}
\label{deltadef}
\Gamma(\alpha,\beta)=
\sup(\{i<\cf(\mu):f_\beta(i)\leq f_\alpha(i)\})
\end{equation}

The coloring $\Gamma$ is the critical ingredient in Shelah's proof of $\mu^+\nrightarrow[\mu^+]^2_{\cf(\mu)}$ for
singular~$\mu$, and it plays a central role in the sequel as well. One can consult Conclusion~4.1(a) on page~67 of~\cite{cardarith})or Section~5 of~\cite{myhandbook} (among many other places) for an exposition of this.

We need one standard fact about scales in our proof.  We remind the reader that notation of the form ``$(\exists^*\beta<\lambda)\psi(\beta)$''
  means $\{\beta<\lambda:\psi(\beta)\text{ holds}\}$ is unbounded below~$\lambda$, while
    ``$(\forall^*\beta<\lambda)\psi(\beta)$'' means that $\{\beta<\lambda:\psi(\beta)\text{ fails}\}$ is bounded
     below~$\lambda$.

\begin{lemma}
\label{scalelemma}
Let $(\vec{\mu},\vec{f})$ be a scale for $\mu$. Then
\begin{equation}
(\forall^*i<\cf(\mu))(\forall \eta<\mu_i)(\exists^*\alpha<\mu^+)[\eta<f_\alpha(i)].
\end{equation}
\end{lemma}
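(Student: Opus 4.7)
The plan is a direct contradiction argument using the cofinality property of the scale. Write
\[
I := \bigl\{i<\cf(\mu) : (\exists\, \eta<\mu_i)\,\neg(\exists^*\alpha<\mu^+)[\eta<f_\alpha(i)]\bigr\}.
\]
Our goal is to show that $I$ is bounded in $\cf(\mu)$. Unpacking the definition, $i \in I$ means there is some $\eta_i<\mu_i$ and some $\alpha_i<\mu^+$ such that $f_\alpha(i)\leq\eta_i$ for every $\alpha\in[\alpha_i,\mu^+)$.

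Suppose toward contradiction that $I$ is unbounded in $\cf(\mu)$. For each $i\in I$ fix witnesses $\eta_i$ and $\alpha_i$ as above, and set
\[
\alpha^*:=\sup\{\alpha_i:i\in I\}.
\]
Since $|I|\leq\cf(\mu)<\mu_0\leq\mu<\mu^+$ and $\mu^+$ is regular, we have $\alpha^*<\mu^+$. Now define $f\in\prod_{i<\cf(\mu)}\mu_i$ by setting $f(i)=\eta_i+1$ for $i\in I$ and $f(i)=0$ otherwise. By the cofinality clause in the definition of a scale, there is some $\alpha<\mu^+$ with $f<^*f_\alpha$; moreover, the set of such $\alpha$ is an end-segment of $\mu^+$ (because $\vec{f}$ is $<^*$-increasing), so we may choose such an $\alpha$ with $\alpha\geq\alpha^*$.

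This $\alpha$ delivers the contradiction. On the one hand, since $\alpha\geq\alpha_i$ for every $i\in I$, we have $f_\alpha(i)\leq\eta_i$ for \emph{all} $i\in I$. On the other hand, $f<^*f_\alpha$ means $\{i<\cf(\mu):f_\alpha(i)\leq f(i)\}$ is bounded in $\cf(\mu)$, so since $I$ is unbounded, we can pick some $i\in I$ with $f_\alpha(i)>f(i)=\eta_i+1$, contradicting the previous sentence. Hence $I$ is bounded, which is exactly the conclusion of the lemma. No step here looks delicate — the only thing to be careful about is that $|I|<\cf(\mu^+)$ so the supremum of the $\alpha_i$ stays below $\mu^+$, and that we can slide $\alpha$ past $\alpha^*$, both of which are immediate.
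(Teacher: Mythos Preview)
Your proof is correct and is precisely the kind of argument the paper has in mind: the paper's own proof is the single sentence ``If not, one easily obtains a contradiction to $(\vec{\mu},\vec{f})$ being a scale,'' and your argument is the natural way to unpack that remark. There is nothing to add.
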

\begin{proof}
If not, one easily obtains a contradiction to $(\vec{\mu},\vec{f})$ being a scale.
\end{proof}

\medskip

\noindent{\sf Elementary Submodels}

\medskip

We have the usual conventions when dealing with elementary submodels. In brief, we always assume that $\chi$ is regular cardinal much larger than anything relevant to our theorem, and we  let $\mathfrak{A}$ denote the structure $\langle H(\chi),\in, <_\chi\rangle$ where $H(\chi)$ is the collection of sets hereditarily of cardinality less than $\chi$, and $<_\chi$ is some well-order of $H(\chi)$. The use of $<_\chi$ means that our structure $\mathfrak{A}$ has definable Skolem functions and it makes sense to talk about Skolem hulls. In general, if $B\subs H(\chi)$, then we denote the Skolem hull of $B$ in $\mathfrak{A}$ by $\Sk_{\mathfrak{A}}(B)$.

The following technical lemma due originally to Baumgartner~\cite{jb} (see the last section of~\cite{myhandbook}, or~\cite{nsbpr} for a proof) is crucial for our work.

\begin{lemma}
\label{newcharlem}
Assume that $M\prec\mathfrak{A}$ and let $\sigma\in M$ be a cardinal.  If we define $N=\Sk_{\mathfrak{A}}(M\cup\sigma)$
then for all regular cardinals $\tau\in M$ greater than $\sigma$, we have
\begin{equation*}
\sup(M\cap\tau)=\sup(N\cap\tau).
\end{equation*}
\end{lemma}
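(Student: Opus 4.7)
The plan is to show the nontrivial inequality $\sup(N\cap\tau)\leq\sup(M\cap\tau)$; the reverse is immediate from $M\subseteq N$. We can assume $\sigma$ is infinite, since otherwise $\sigma\subseteq M$ already and $N=M$.

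First, I would take an arbitrary $\beta\in N\cap\tau$ and unpack what it means to lie in $\Sk_{\mathfrak{A}}(M\cup\sigma)$. Concretely, there is a definable Skolem term $t$, parameters $\bar m=(m_1,\dots,m_k)\in M$, and ordinals $\bar\xi=(\xi_1,\dots,\xi_\ell)\in\sigma$ such that $\beta=t^{\mathfrak{A}}(\bar m,\bar\xi)$. The goal is to replace $\beta$ by an ordinal in $M\cap\tau$ that dominates it.

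Next, using that $\sigma,\tau,\bar m$ all lie in $M$, I would define inside $M$ the auxiliary function
\begin{equation*}
F:\sigma^\ell\to\tau,\qquad F(\bar\eta)=\begin{cases} t^{\mathfrak{A}}(\bar m,\bar\eta) & \text{if this is an ordinal $<\tau$,}\\ 0 & \text{otherwise.}\end{cases}
\end{equation*}
Since all the data defining $F$ live in $M$ and $\mathfrak{A}$ has definable Skolem functions, $F\in M$, and hence $\nu:=\sup\ran(F)\in M$. Because $|\ran(F)|\leq\sigma^\ell=\sigma$ (here I use $\sigma$ infinite and $\ell$ finite), and $\tau>\sigma$ is regular, elementarity in $\mathfrak{A}$ yields $\nu<\tau$. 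Therefore $\nu\in M\cap\tau$, and by construction $\beta=F(\bar\xi)\leq\nu\leq\sup(M\cap\tau)$. Since $\beta\in N\cap\tau$ was arbitrary, this gives $\sup(N\cap\tau)\leq\sup(M\cap\tau)$, finishing the proof.

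The only real subtlety is the cardinality bookkeeping: one must know that the Skolem term producing $\beta$ uses only \emph{finitely} many parameters from $\sigma$, so that the function $F$ has domain of size $\sigma$ rather than something larger, allowing regularity of $\tau>\sigma$ to pin the supremum strictly below $\tau$. Once that point is handled cleanly, everything else is a standard elementarity argument using that $\sigma$, $\tau$, and the parameters $\bar m$ are in $M$.
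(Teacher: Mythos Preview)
Your argument is correct and is in fact the standard proof of this lemma. Note, however, that the paper does not supply its own proof: it attributes the result to Baumgartner and refers the reader to~\cite{jb},~\cite{myhandbook}, and~\cite{nsbpr} for details. The argument you give is essentially what one finds in those references, so there is nothing to compare. One small remark on phrasing: the step ``elementarity in $\mathfrak{A}$ yields $\nu<\tau$'' is slightly misleading---the inequality $\nu<\tau$ is simply true in $V$ because $\tau$ is regular and $|\ran(F)|\leq\sigma<\tau$; elementarity is used only to place $F$ (and hence $\nu$) inside $M$.
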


As a corollary to the above, we can deduce an important fact about {\em characteristic functions} of models,
which we define next.

\begin{definition}
\label{chardef}
Let $\mu$ be a singular cardinal of cofinality~$\kappa$, and let $\vec{\mu}=\langle\mu_i:i<\kappa\rangle$ be an increasing
sequence of regular cardinals cofinal in $\mu$.  If $M$ is an elementary submodel of $\mathfrak{A}$
such that
\begin{itemize}
\item $|M|<\mu$,
\item $\langle \mu_i:i<\cf(\mu)\rangle\in M$, and
\item $\kappa+1\subs M$,
\end{itemize}
then the {\em characteristic function of $M$ on $\vec{\mu}$} (denoted $\Ch^{\vec{\mu}}_M$) is the function
with domain $\kappa$ defined by
\begin{equation*}
\Ch^{\vec{\mu}}_M(i):=
\begin{cases}
\sup(M\cap\mu_i) &\text{if $\sup(M\cap\mu_i)<\mu_i$,}\\
0  &\text{otherwise.}
\end{cases}
\end{equation*}
If $\vec{\mu}$ is clear from context, then we suppress reference to it in the notation.
\end{definition}

In the situation of Definition~\ref{chardef}, it is clear that $\Ch^{\vec{\mu}}_M$ is an element of the product
 $\prod_{i<\kappa}\mu_i$, and furthermore, $\Ch^{\vec{\mu}}_M(i)=\sup(M\cap\mu_i)$ for all
sufficiently large $i<\kappa$.  We can now see that the following corollary follows immediately from Lemma~\ref{newcharlem}.

\begin{corollary}
\label{skolemhulllemma}
Let $\mu$, $\kappa$, $\vec{\mu}$, and $M$ be as in Definition~\ref{chardef}.
If $i^*<\kappa$ and we define $N$ to be $\Sk_{\mathfrak{A}}(M\cup\mu_{i^*})$,
then
\begin{equation}
\Ch_M\restr [i^*+1,\kappa)=\Ch_N\restr [i^*+1,\kappa).
\end{equation}
\end{corollary}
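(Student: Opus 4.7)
The plan is to apply Lemma~\ref{newcharlem} to $N = \Sk_{\mathfrak{A}}(M\cup\mu_{i^*})$ with the cardinal $\sigma = \mu_{i^*}$, exploiting the fact that each $\mu_i$ with $i > i^*$ is a regular cardinal in $M$ strictly greater than $\sigma$.

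First I would verify that $\Ch_N$ is even defined, i.e.\ that $N$ satisfies the hypotheses of Definition~\ref{chardef}. This is routine: $|N|\leq|M|+\mu_{i^*}<\mu$ since both $|M|$ and $\mu_{i^*}$ are below $\mu$; $\vec{\mu}\in M\subs N$; and $\kappa+1\subs M\subs N$. Also $\mu_{i^*}$ genuinely lies in $M$ because $\vec{\mu}\in M$ and $i^*\in\kappa+1\subs M$.

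Next, fix $i\in[i^*+1,\kappa)$. Then $\mu_i$ is a regular cardinal, $\mu_i\in M$ (again via $\vec{\mu}\in M$ and $i\in M$), and $\mu_i>\mu_{i^*}=\sigma$. Lemma~\ref{newcharlem} applies verbatim and yields
\begin{equation*}
\sup(M\cap\mu_i)=\sup(N\cap\mu_i).
\end{equation*}
Finally I would unwind Definition~\ref{chardef}: since the two suprema are equal, either both are strictly less than $\mu_i$ (in which case $\Ch_M(i)=\sup(M\cap\mu_i)=\sup(N\cap\mu_i)=\Ch_N(i)$), or both equal $\mu_i$ (in which case $\Ch_M(i)=0=\Ch_N(i)$).

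There is no real obstacle; all the nontrivial content is packaged inside Lemma~\ref{newcharlem}, and the corollary is essentially a dictionary translation between ``sup of intersection with $\mu_i$'' and ``value of the characteristic function at $i$''. The only thing one has to be a touch careful about is that the equality is only asserted on the tail $[i^*+1,\kappa)$, since for $i\leq i^*$ we have $\mu_i\leq\mu_{i^*}\subs N$ and so typically $\Ch_N(i)=0$ while $\Ch_M(i)$ need not vanish.
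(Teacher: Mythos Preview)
Your proposal is correct and is exactly the argument the paper has in mind: the paper simply states that the corollary ``follows immediately from Lemma~\ref{newcharlem}'' without spelling out any details, and your write-up supplies precisely the routine verification (that $\mu_{i^*}\in M$, that $|N|<\mu$, and that each $\mu_i$ for $i>i^*$ is a regular cardinal in $M$ above $\mu_{i^*}$) needed to invoke that lemma.
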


We introduce one more bit of notation concerning elementary submodels.

\begin{definition}
Let $\lambda$ be a regular cardinal. A $\lambda$-approximating sequence is a continuous $\in$-chain
$\mathfrak{M}=\langle M_i:i<\lambda\rangle$ of elementary submodels of $\mathfrak{A}$ such that
\begin{enumerate}
\item $\lambda\in M_0$,
\item $|M_i|<\lambda$,
\item $\langle M_j:j\leq i\rangle\in M_{i+1}$, and
\item $M_i\cap\lambda$ is a proper initial segment of $\lambda$.
\end{enumerate}
If $x\in H(\chi)$, then we say that $\mathfrak{M}$ is a $\lambda$-approximating sequence over $x$ if
$x\in M_0$.
\end{definition}

Note that if $\mathfrak{M}$ is a $\lambda$-approximating sequence and $\lambda=\mu^+$, then $\mu+1\subs M_0$ because
of condition~(4) and the fact that $\mu$ is an element of each $M_i$.

\section{Main Lemma}

In this section we prove a lemma which shows that the generalized $C$-sequences isolated in the preceding section have some very nice properties. The following {\em ad hoc} definition is key; note that the terminology implicitly assumes  the presence of a generalized $C$-sequence in the background.

\begin{definition}
\label{varphidefn}
Suppose $k$ and $m$ are natural numbers, and $\eta<\mu^+$.  The formula $\varphi_{k,m,\eta}(\beta^*,\beta)$ says
\begin{enumerate}
\item $\beta^*<\beta$,
\sk
\item $\rho^m_2(\beta^*,\beta)=k$,
\sk
\item $\eta=\eta^m(\beta^*,\beta)$, and
\sk
\item $\eta<\beta^*$.
\end{enumerate}
\end{definition}

The formula $\varphi_{k, m,\eta}$ isolates a particular configuration of ordinals, a configuration whose importance can be glimpsed in the following lemma:

 \begin{lemma}
 \label{varphilemma}
 If $\varphi_{k, m,\eta}(\beta^*,\beta)$ holds, then
\begin{equation}
\label{point1}
\eta<\alpha\leq\beta^*\Longrightarrow \beta^m_i(\alpha,\beta)=\beta^m_i(\beta^*,\beta)\text{ for }i\leq k.
\end{equation}
Given the role of $k$, we see that if $\varphi_{m, k,\eta}(\beta^*,\beta)$ holds, then
\begin{equation}
\label{point2}
\eta<\alpha\leq\beta^*\Longrightarrow \beta^m_k(\alpha,\beta)=\beta^*.
\end{equation}
\end{lemma}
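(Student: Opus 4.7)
The plan is simply to unpack the definition of $\varphi_{k,m,\eta}$ and apply part~(2) of Proposition~\ref{minimalprop} to the ordinary $C$-sequence $\langle e^m_\gamma:\gamma<\mu^+\rangle$, since by design the $m$-walks along $\bar{e}$ are nothing other than ordinary minimal walks along this $C$-sequence.

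In more detail, suppose $\varphi_{k,m,\eta}(\beta^*,\beta)$ holds. Unpacking, this gives $\rho^m_2(\beta^*,\beta)=k$, $\eta^m(\beta^*,\beta)=\eta$, and $\eta<\beta^*$. Now fix any $\alpha$ with $\eta<\alpha\leq\beta^*$. I would apply Proposition~\ref{minimalprop}(2) to the pair $(\beta^*,\beta)$, playing the role of $(\alpha,\beta)$ in the proposition, with $\alpha$ playing the role of $\alpha^*$; the hypothesis $\eta^m(\beta^*,\beta)<\alpha\leq\beta^*$ is exactly what we have. The conclusion of the proposition, translated back into the generalized-$C$-sequence notation, reads
\begin{equation*}
\beta^m_i(\alpha,\beta)=\beta^m_i(\beta^*,\beta)\quad\text{for all }i\leq\rho^m_2(\beta^*,\beta)=k,
\end{equation*}
which is (\ref{point1}).

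For (\ref{point2}), note that by the very definition of $\rho^m_2$ we have $\beta^m_k(\beta^*,\beta)=\beta^m_{\rho^m_2(\beta^*,\beta)}(\beta^*,\beta)=\beta^*$; combining this with the $i=k$ instance of (\ref{point1}) gives $\beta^m_k(\alpha,\beta)=\beta^*$, as required.

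There is no real obstacle here: the lemma is a direct restatement of Proposition~\ref{minimalprop}(2) in the notation that $\varphi_{k,m,\eta}$ was crafted to abbreviate. The role of the clauses $\rho^m_2(\beta^*,\beta)=k$ and $\eta<\beta^*$ in Definition~\ref{varphidefn} is precisely to package the hypotheses needed to invoke that part of the proposition and to pin down the index at which the two walks first agree on~$\beta^*$.
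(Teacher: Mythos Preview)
Your proof is correct and takes essentially the same approach as the paper, which simply says the lemma follows immediately from Proposition~\ref{minimalprop}. You have spelled out in detail exactly the unpacking the paper leaves implicit.
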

\begin{proof}
This follows immediately from Proposition~\ref{minimalprop}.
\end{proof}

We come now to the main  lemma of this paper:

\begin{lemma}
\label{mainlemma}
Let $\mu$ be a singular cardinal, and let $\bar{e}$ be a generalized $C$-sequence as in the preceding section. Then for any unbounded $A\subs\mu^+$, there are $k$, $m$, and $\eta$ for which
\begin{equation}
(\exists^{\stat}\beta^*<\mu^+)(\exists^*\beta\in A)[\varphi_{k, m, \eta}(\beta^*,\beta)].
\end{equation}
\end{lemma}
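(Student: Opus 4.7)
The plan is to prove the lemma in two stages. Let
\[
T := \{\beta^* < \mu^+ : (\exists^* \beta \in A)(\exists k, m < \omega)(\exists \eta < \beta^*)[\varphi_{k, m, \eta}(\beta^*, \beta)]\}.
\]
I first show $T$ is stationary using club-guessing, and then uniformize $(k, m, \eta)$ over a stationary subset of $T$ via pigeonhole and Fodor.

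For stationarity, given any club $C \subseteq \mu^+$, I aim to locate $\beta^* \in T \cap C$. Fix a $\mu^+$-approximating sequence $\mathfrak{M}$ over $(A, \bar{e}, \bar{C}, \ldots)$, let $E = \{M_i \cap \mu^+ : i < \mu^+\}$, and apply the club-guessing property of $\bar{C}$ to $E \cap C$ to obtain $\delta^* \in S$ such that $\bar{C}$ guesses $E \cap C$ at $\delta^*$. For each $\beta \in A \setminus (\delta^* + 1)$ and $m < \omega$, the $m$-walk from $\beta$ terminates at $\delta^* \in e^m_{\gamma^m(\delta^*, \beta)}$, so by the swallowing property of Section~2, $C_{\delta^*}$ (respectively $C_{\delta^*}[m]$ in the countable-cofinality case) is contained in $e^m_{\gamma^m(\delta^*, \beta)}$. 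A cascade of pigeonholes on the walk data stabilizes things on an unbounded $B \subseteq A$: first fix $(k, m) \in \omega^2$ (using $\omega < \mu^+$), then fix a bound $\sigma^* < \mu$ for $|e^m_{\gamma^m(\delta^*, \beta)}|$ (using $\mu < \mu^+$), then fix a bound $\gamma^{**} < \delta^*$ for $\gamma^{-, m}(\delta^*, \beta)$. Using that cofinalities in $\nacc(C_{\delta^*})$ are unbounded in $\mu$, pick $\alpha^* \in \nacc(C_{\delta^*}) \cap (\gamma^{**}, \delta^*)$ with $\cf(\alpha^*) > \sigma^*$. Then $\alpha^* \in C_{\delta^*} \subseteq E \cap C$ places $\alpha^*$ in $C$, and for each $\beta \in B$, $\cf(\alpha^*) > \sigma^* \geq |e^m_{\gamma^m}|$ forces $\alpha^* \in \nacc(e^m_{\gamma^m})$, while $\alpha^* > \gamma^{-, m}$ lets Proposition~\ref{minimalprop} combine with Lemma~\ref{varphilemma} to yield $\varphi_{k, m, \eta^m(\alpha^*, \beta)}(\alpha^*, \beta)$. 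This places $\alpha^*$ in $T \cap C$.

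For uniformization, each $\beta^* \in T$ has $A(\beta^*)$ of size $\mu^+$, with witnessing triples in $\omega^2 \times \beta^*$, a set of size $< \mu^+$; so pigeonhole selects a single triple $(k(\beta^*), m(\beta^*), \eta(\beta^*))$ with $\eta(\beta^*) < \beta^*$ and an unbounded subfamily of realizing $\beta$'s. Pigeonhole on $T$ (over the countable set $\omega^2$) fixes $(k, m)$ on a stationary $T^{(1)} \subseteq T$, and Fodor's lemma applied to the regressive map $\beta^* \mapsto \eta(\beta^*)$ on $T^{(1)}$ fixes $\eta$ on a stationary $T^{(2)} \subseteq T^{(1)}$. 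This $T^{(2)}$ together with $(k, m, \eta)$ is the required witness.

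The main obstacle I anticipate is in the countable-cofinality case, where the club-guessing property guarantees $\cf(\alpha^*) > \mu_m^+$ but the cascade demands $\cf(\alpha^*) > \sigma^*$, and $\sigma^*$ is bounded only by $\cf(\gamma^m) + \mu_m^+$ (via~\eqref{2.14}), which may exceed $\mu_m^+$ when $\cf(\gamma^m)$ is large. I plan to handle this by inserting an additional pigeonhole bounding $\cf(\gamma^m)$ by some $\tau^* < \mu$, and then --- using that $\rho_2^m$ and $\gamma^m$ are eventually stable in $m$ (since increasing $m$ enlarges the $e^m$'s and can only shorten walks) --- replacing $m$ by some $m' \geq m$ with $\mu_{m'}^+ \geq \tau^*$, so that $\mu_{m'}^+$ dominates the $e^{m'}$-size bound and the cofinality inequality proceeds as in the uncountable case.
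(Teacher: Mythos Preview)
Your argument is essentially correct and reaches the same conclusion, but it takes a different route from the paper's proof. The paper fixes a single $\beta\in A$ above the guessing point $\delta$, computes $k$, $m$, $\eta$, and $\beta^*$ from that one walk, and then uses two rounds of elementary-submodel reflection: since $\beta\geq M_\delta\cap\mu^+$ and $k,m,\eta,\beta^*\in M_\delta$, reflection at $M_\delta$ yields $(\exists^*\beta\in A)\varphi_{k,m,\eta}(\beta^*,\beta)$; since $\beta^*=M_{\beta^*}\cap\mu^+$ and $k,m,\eta\in M_{\beta^*}$, reflection at $M_{\beta^*}$ then yields the stationarity clause. There is no pigeonhole cascade and no Fodor step. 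By contrast, you pigeonhole over all $\beta>\delta^*$ to force a single $\alpha^*$ to work for an unbounded $B$, and then separately uniformize $(k,m,\eta)$ over the stationary set $T$ by pigeonhole plus Fodor. Your route is more elementary (no reflection trick), but it costs more bookkeeping, and your approximating sequence and club $E$ end up playing no role beyond producing a club to intersect with $C$ --- you could simply guess $C$ directly.

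Two small points about the countable-cofinality case. First, your phrase ``first fix $(k,m)$'' needs to mean: for each $\beta$ first record the stabilization index $m^*(\beta)$ of the walk $\beta\to\delta^*$ and the stable walk-length, then pigeonhole that pair; otherwise passing to $m'\geq m$ need not preserve $\gamma^m$. Second, after replacing $m$ by $m'$ you must re-pigeonhole $\gamma^{-,m'}(\delta^*,\beta)$ before choosing $\alpha^*$, since even when the trace stabilizes the values $\beta_i^{-,m}$ can grow with $m$ (they are suprema over the larger sets $e^{m'}$). Both are easy fixes and your outline accommodates them; the paper sidesteps them entirely by working with a single $\beta$.
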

\begin{proof}

Let $S$, $\bar{C}$, and $\bar{e}$ be as in previous section's discussion, and let $A\subs\mu^+$ be unbounded.  We set
\begin{equation}
x:=\{\mu, S, \bar{C},\bar{e},A\}
\end{equation}
and let $\langle M_\alpha:\alpha<\mu^+\rangle$ be a $\mu^+$-approximating sequence over~$x$.  Define
\begin{equation}
E:=\{\delta<\mu^+:M_\delta\cap\mu^+=\delta\},
\end{equation}
and fix $\delta\in S$ such that $C_\delta$ guesses $E$ in the appropriate sense. In order to find $k$, $m$, and $\eta$ we must divide into cases.

\medskip
{\sf Case $\cf(\mu)>\aleph_0$:}
\medskip

In this situation, we set $m=1$ (recall that $\bar{e}$ is ``silly'' in this case) and $k=\rho^1_2(\delta,\beta)$. Next, fix $\beta^*$ such that
\begin{gather}
\beta^*\in\nacc(C_\delta)\cap E,\\
\gamma^{1,-}(\delta,\beta)<\max(C_\delta\cap\beta^*),\\
\intertext{and}
\label{coflarge}\cf(\beta^*)>|e^1_{\gamma^1(\delta,\beta)}|.
\end{gather}
Notice that these conditions are satisfied by all sufficiently large $\beta^*\in\nacc(C_\delta)$ because of our assumptions on $\bar{C}$ and $\bar{e}$. We now define
\begin{equation}
\eta:=\sup(e^1_{\beta^1_{k-1}(\beta^*,\beta)}\cap\beta^*),
\end{equation}
and we claim $\varphi_{k,m,\eta}(\beta^*,\beta)$ holds.

Clearly $\beta^*<\beta$, so the first requirement is of no concern.  Since
\begin{equation}
\gamma^{1,-}(\delta,\beta)<\beta^*<\delta,
\end{equation}
we know that
\begin{gather}
\beta^1_i(\beta^*,\beta)=\beta^1_i(\delta,\beta),\\
\intertext{and}
\beta^{1,-}_i(\beta^*,\beta)=\beta^{1,-}_i(\delta,\beta)
\end{gather}
for all $i<\rho^1_2(\delta,\beta)$.

By definition, we have
\begin{equation}
\delta\in e^1_{\gamma^1(\delta,\beta)}=e^1_{\beta^1_{k-1}(\delta,\beta)},
\end{equation}
and so by our choice of $\bar{e}$ we obtain
\begin{equation}
\label{barechoice}
\beta^*\in C_\delta\subs e^1_{\gamma^1(\delta,\beta)}=e^1_{\beta^1_{k-1}(\delta,\beta)}=e^1_{\beta^1_{k-1}(\beta^*,\beta)}.
\end{equation}
It follows immediately that
\begin{equation}
\rho^1_2(\beta^*,\beta)=k,
\end{equation}
and so we have obtained the second requirement of Definition~\ref{varphidefn}.

Next, we note that for $i<k$ we have
\begin{align*}
\beta^{1,-}_i(\beta^*,\beta)=\beta^{1,-}_i(\delta,\beta)&\leq\gamma^{1,-}(\delta,\beta)\\
&<\max(C_\delta\cap\beta^*)\leq\sup(e^1_{\beta^1_{k-1}(\beta^*,\beta)}\cap\beta^*)=\eta.
\end{align*}
From this, we see
\begin{equation}
\eta=\eta^1(\beta^*,\beta),
\end{equation}
and we have met the third demand of Definition~\ref{varphidefn}.

Finally, our requirement~(\ref{coflarge}) taken together with~(\ref{barechoice}) lets us conclude
\begin{equation}
\beta^*\in\nacc(e^1_{\beta^1_{k-1}(\beta^*,\beta)}),
\end{equation}
and therefore
\begin{equation}
\eta=\sup(e^1_{\beta^1_{k-1}(\beta^*,\beta)}\cap\beta^*)<\beta^*.
\end{equation}

\medskip
\noindent{\sf Case $\cf(\mu)=\aleph_0$:}
\medskip

In this situation we must work a little harder.  First, we define
\begin{equation}
\gamma^*:=\sup\{\gamma^{m,-}(\delta,\beta):m<\omega\}.
\end{equation}
Since $\cf(\delta)>\aleph_0$, we know that $\gamma^*<\delta$, and
\begin{equation}
\beta^{m,-}_i(\delta,\beta)\leq\gamma^*\text{ for all }m<\omega\text{ and }i<\rho^m_2(\delta,\beta).
\end{equation}

Next (see the discussion after Definition~\ref{generalizeddef}) we fix $m^*<\omega$ so that
\begin{equation}
m^*\leq m<\omega\Rightarrow\langle \beta^m_i(\delta,\beta):i<\rho^m_2(\delta,\beta)\rangle=\langle \beta^{m^*}_i(\delta,\beta):i<\rho^{m^*}_2(\delta,\beta)\rangle,
\end{equation}
and define
\begin{equation}
k:=\rho^{m^*}_2(\delta,\beta).
\end{equation}

We then let $m\geq m^*$ be the least natural number for which
\begin{equation}
\cf(\gamma^{m^*}(\delta,\beta))\leq\mu^+_m.
\end{equation}

Taking this together with (\ref{2.14}), we conclude
\begin{equation}
|e^{m}_{\gamma^m(\delta,\beta)}|=|e^{m}_{\gamma^{m^*}(\delta,\beta)}|\leq \mu^+_m.
\end{equation}
Note as well that~(\ref{2.15}) tells us
\begin{equation}
\label{3.25}
C_\delta[m]\subs e^m_{\gamma^m(\delta,\beta)}
\end{equation}
as well.

Our assumptions on $\bar{C}$ now allow us to find $\beta^*$ satisfying the following:
\begin{itemize}
\sk
\item $\beta^*\in\nacc(C_\delta[m])\cap E$,
\sk
\item $\cf(\beta^*)>\mu^+_m$, and
\sk
\item $\gamma^*<\max(C_\delta[m]\cap\beta^*)$.
\end{itemize}
Note that the last requirement can be achieved because the set of candidates satisfying the first two demands is unbounded in $\delta$.

We now define
\begin{equation}
\eta:=\sup(e^m_{\gamma^m(\delta,\beta)}\cap\beta^*).
\end{equation}

The verification that $\varphi_{k, m,\eta}(\beta^*,\beta)$ holds follows the same broad outline as we saw in the preceding case. Once again, since $\beta^*\in C_\delta$ it is immediate that $\beta^*<\beta$.

Since $\gamma^*<\beta^*<\delta$ and $m\geq m^*$, it follows that for $i<k$ we have
\begin{equation}
\beta^m_i(\beta^*,\beta)=\beta^m_i(\delta,\beta)=\beta^{m^*}_i(\delta,\beta),
\end{equation}
and in particular
\begin{equation}
\beta^m_{k-1}(\beta^*,\beta)=\gamma^m(\delta,\beta).
\end{equation}
By~(\ref{3.25}), it follows that
\begin{equation}
\beta^m_k(\beta^*,\beta)=\min(e^m_{\beta^m_{k-1}(\beta^*,\beta)}\setminus\beta^*)=\beta^*
\end{equation}
and we conclude
\begin{equation}
\rho^m_2(\beta^*,\beta)=k.
\end{equation}

The fact that $\eta=\eta^m(\beta^*,\beta)$ also follows easily as we have ensured
\begin{equation}
\gamma^*<\max(C_\delta[m]\cap\beta^*)\leq\sup(e^m_{\gamma^m(\beta^*,\beta)}\cap\beta^*)=\sup(e^m_{\gamma^m(\delta,\beta)}\cap\beta^*)=\eta.
\end{equation}
Finally, since
\begin{equation}
\cf(\beta^*)>\mu^+_m\geq |e^m_{\gamma^m(\delta,\beta)}|=|e^m_{\beta^m_{k-1}(\delta,\beta)}|,
\end{equation}
it follows that
\begin{equation}
\eta:=\sup(e^m_{\gamma^m(\delta,\beta)}\cap\beta^*)<\beta^*,
\end{equation}
and so $\varphi_{k,m,\eta}(\beta^*,\beta)$ holds.

Combining the two cases, we find that we have $k$, $m$, $\eta$, $\beta^*$, $\delta$, and $\beta$ such that
\begin{itemize}
\sk
\item $\varphi_{k,m,\eta}(\beta^*,\beta)$ holds,
\sk
\item $\beta^*<\delta<\beta$, and
\sk
\item both $\beta^*$ and $\delta$ are in $E$.
\sk
\end{itemize}

We finish the proof using standard elementary submodel arguments.  Since the model $M_\delta$ contains $x$ together with $k$, $m$, $\eta$, and $\beta^*$, but $M_\delta\cap\mu^+=\delta\leq\beta$, it follows that
\begin{equation}
(\exists^*\beta\in A)[\varphi_{k, m,\eta}(\beta^*,\beta)].
\end{equation}
Since $x$, $k$, $m$, and $\eta$ are in $M_{\beta^*}$ and $\beta^*=M_{\beta^*}\cap\mu^+$, we obtain
\begin{equation}
(\exists^{\stat}\beta^*<\mu^+)(\exists^*\beta\in A)[\varphi_{k,m,\eta}(\beta^*,\beta)],
\end{equation}
as required.
\end{proof}
\section{Main Theorem}

\begin{theorem}[Main Theorem]
\label{theorem2}
Suppose $\mu$ is a singular cardinal. There is a function
 \begin{equation}
 D:[\mu^+]^2\rightarrow [\mu^+]^2\times\cf(\mu)
 \end{equation}
 such that for any unbounded $A\subs\mu^+$, there is a stationary $S\subs\mu^+$ such that
 \begin{equation}
 [S]^2\times\cf(\mu)\subs\ran(D\restr [A]^2).
 \end{equation}
\end{theorem}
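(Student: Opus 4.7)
My plan is to define
\[
D(\alpha,\beta)=\bigl(\sigma(\alpha,\beta),\Gamma(\alpha,\beta)\bigr),
\]
where $\Gamma$ is Shelah's scale coloring from equation~(\ref{deltadef}) and $\sigma(\alpha,\beta)\in[\mu^+]^2$ is a pair of ordinals extracted from the minimal walk from $\beta$ to $\alpha$ along the generalized $C$-sequence $\bar e$ of Section~2.  Concretely, I would make $\sigma(\alpha,\beta)$ output two distinguished stops of the walk, chosen uniformly in a way that the Main Lemma and Lemma~\ref{varphilemma} can recover them for suitable $(\alpha,\beta)$.  Roughly: $\sigma$ should output the stops $\beta^{m^*}_{k^*-j_1}(\alpha,\beta)$ and $\beta^{m^*}_{k^*-j_2}(\alpha,\beta)$ where $m^*$ is the eventual stabilization index of the walk and $k^*=\rho^{m^*}_2(\alpha,\beta)$, with indices arranged so that the iterated application of the Main Lemma below reads them off as the desired pair.

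To prove the required property, given unbounded $A\subs\mu^+$ I plan to invoke Lemma~\ref{mainlemma} twice.  The first application yields parameters $(k_1,m_1,\eta_1)$ and a stationary $S_1\subs\mu^+$ such that for every $\beta^*\in S_1$ the set $A_{\beta^*}=\{\beta\in A:\varphi_{k_1,m_1,\eta_1}(\beta^*,\beta)\}$ is unbounded in $\mu^+$.  By Lemma~\ref{varphilemma} we then have $\beta^{m_1}_{k_1}(\alpha,\beta)=\beta^*$ whenever $\eta_1<\alpha\leq\beta^*$ and $\beta\in A_{\beta^*}$.  The second application, run with the unbounded (stationary) set $S_1$ in place of $A$, produces parameters $(k_2,m_2,\eta_2)$ and a stationary $S\subs S_1$ such that each $\alpha^*\in S$ has unboundedly many $\beta^*\in S_1$ with $\varphi_{k_2,m_2,\eta_2}(\alpha^*,\beta^*)$.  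Chaining walks through the factorization property, for $\alpha^*<\beta^*$ in $S$ (with appropriate lower bounds on $\alpha$) the walk from $\beta\in A$ down to $\alpha$ passes first through $\beta^*$ and then through $\alpha^*$, so $\sigma(\alpha,\beta)$ reads off the pair $\{\alpha^*,\beta^*\}$.

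For the color coordinate: since for each $\beta^*\in S$ the collection $A_{\beta^*}$ is unbounded in $\mu^+$, I would apply the stronger unboundedness properties of the scale coloring (Shelah's $\mu^+\nrightarrow[\mu^+]^2_{\cf(\mu)}$, together with Lemma~\ref{scalelemma}) to slide $(\alpha,\beta)$ inside the walk-compatible witness set so that $\Gamma(\alpha,\beta)$ takes any prescribed value $\zeta<\cf(\mu)$.  This is a density argument: after the walk structure fixes $\beta^*$ and $\alpha^*$, there is enough residual freedom in $(\alpha,\beta)\in[A]^2$ to hit every color.

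The main obstacle I anticipate is the bookkeeping needed to make the iterated Main Lemma output pairs that lie in $[S]^2$ rather than only in a skewed relation ``$\alpha^*\in S,\ \beta^*\in S_1$ for stationarily many $\beta^*$.''  Bridging this gap will likely require either a Fodor-type pressing-down step to force the pair $(\alpha^*,\beta^*)$ to live inside a single stationary set, or splitting $S$ along the finitely many parameter values $(k_1,m_1,\eta_1),(k_2,m_2,\eta_2)$ and taking an intersection.  A secondary subtlety is choosing the extraction function $\sigma$ so that the stops recovered from the doubly-iterated walk are exactly the two distinguished stops of the stabilized walk—this amounts to verifying that $m^*=\max(m_1,m_2)$ and $k^*=\rho^{m^*}_2(\alpha,\beta)$ align with $k_1$ and $k_1+k_2$ in the chained walk, which should follow from the monotonicity of the generalized $C$-sequence together with Proposition~\ref{minimalprop}.
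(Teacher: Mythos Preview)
Your plan has a genuine gap at the very definition of $D$.  The function $D$ must be fixed once and for all, before any $A$ is given, yet your extraction map $\sigma$ needs to output the $k_1$-th and $(k_1+k_2)$-th stops of an $m$-walk where $k_1,k_2,m_1,m_2$ are produced by applying Lemma~\ref{mainlemma} to $A$ and to $S_1$.  Your workaround---use the eventual stabilization index $m^*$ of the walk from $\beta$ to $\alpha$ and hope that $m^*=\max(m_1,m_2)$ and that the stop-indices line up with $k_1$ and $k_1+k_2$---does not hold: the stabilization index of a single walk has no relation to the parameters handed back by the Main Lemma, and the length $\rho^{m^*}_2(\alpha,\beta)$ is certainly not forced to equal $k_1+k_2$.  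There is a second, independent problem with the chaining itself: your second invocation of the Main Lemma only tells you that for each $\alpha^*\in S$ there are unboundedly many $\beta^{**}\in S_1$ with $\varphi_{k_2,m_2,\eta_2}(\alpha^*,\beta^{**})$; it does \emph{not} say that the particular $\beta^*\in S$ you were handed satisfies this, so the walk from $\beta$ to $\alpha$ need not pass through $\alpha^*$ at all.  And even if it did, you have arranged nothing about $\alpha$ lying in $A$.

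The paper avoids all of this with two ideas you are missing.  First, it fixes a surjection $\iota:\cf(\mu)\to\omega\times\omega\times\cf(\mu)$ hitting every triple $\langle k,m,\delta\rangle$ unboundedly often, and makes the first step in computing $D(\alpha,\beta)$ be decoding $\iota(\Gamma(\alpha,\beta))=\langle k,m,\delta\rangle$; the walk parameters $k,m$ are thus carried inside the scale color rather than guessed from the walk itself.  Second, $\alpha^*$ is \emph{not} read off the walk from $\beta$ to $\alpha$: instead one sets $\beta^*=\beta^m_k(\alpha,\beta)$, computes $\eta^*=\eta^m(\beta^*,\beta)$, and then takes $\alpha^*$ to be the $k$-th stop of the $m$-walk from $\alpha$ down to $\eta^*+1$.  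A \emph{single} application of the Main Lemma (not two) gives one triple $(k,m,\eta)$ and a stationary $S$; for $\alpha^*<\beta^*$ in $S$ one then uses an elementary-submodel/Skolem-hull argument with Corollary~\ref{skolemhulllemma} to find $\alpha,\beta\in A$ with $\varphi_{k,m,\eta}(\alpha^*,\alpha)$, $\varphi_{k,m,\eta}(\beta^*,\beta)$, and $\Gamma(\alpha,\beta)$ landing exactly on an $i^*$ with $\iota(i^*)=\langle k,m,\delta\rangle$.
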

\begin{proof}
Let $(\vec{\mu},\vec{f})$ be a scale for~$\mu$, and let $\bar{e}$ be a generalized $C$-sequence as in Lemma~\ref{mainlemma}. Choose a function
\begin{equation}
\iota:\cf(\mu)\rightarrow\omega\times\omega\times\cf(\mu)
\end{equation}
such that for any natural numbers $k$ and $m$, and any $\delta<\cf(\mu)$, there are unboundedly many $\gamma<\cf(\mu)$ such that $\iota(\gamma)=\langle k, m, \delta\rangle$. Let $\Gamma:[\mu^+]^2\rightarrow\cf(\mu)$ be the function from~(\ref{deltadef}).

The definition of $D$ will require several other auxiliary functions defined on $[\mu^+]^2$.  First, we let $k$, $m$, and $\delta$ be the two-place functions defined by the recipe
\begin{equation}
\iota(\Gamma(\alpha,\beta))=\langle k(\alpha,\beta), m(\alpha,\beta),\delta(\alpha,\beta)\rangle.
\end{equation}
We then define
\begin{gather}
\beta^*(\alpha,\beta)=\beta^{m(\alpha,\beta)}_{k(\alpha,\beta)}(\alpha,\beta),\\
\eta^*(\alpha,\beta)=\eta^{m(\alpha,\beta)}(\beta^*(\alpha,\beta),\beta),\\
\alpha^*(\alpha,\beta)=\beta^{m(\alpha,\beta)}_{k(\alpha,\beta)}(\eta^*(\alpha,\beta)+1,\alpha),\\
\intertext{and}
D(\alpha,\beta)= \langle\{\alpha^*(\alpha,\beta),\beta^*(\alpha,\beta)\},\delta(\alpha,\beta)\rangle.
\end{gather}

The computation of $D(\alpha,\beta)$ can be described in English as follows.  Given $\alpha<\beta$, we use $\iota$ and $\Gamma$ to obtain $k$, $m$, and $\delta$.  The ordinal $\beta^*$ is the $k$-th step in the $m$-walk from $\beta$ to $\alpha$, and $\eta^*$ is the corresponding value of $\eta^m(\beta^*,\beta)$ computed from this walk.  The ordinal~$\alpha^*$ is then the $k$-th step in the $m$-walk from $\alpha$ down to $\eta^*+1$, and $D$ returns the value $\langle\{\alpha^*,\beta^*\},\delta\rangle$.

Given an unbounded $A\subs\mu^+$, we fix $k$, $m$, and $\eta$ as in Lemma~\ref{mainlemma}. Now  define
\begin{equation}
S^*:=\{\beta^*<\mu^+:(\exists^*\beta\in A)\varphi_{k,m,\eta}(\beta^*,\beta)\}
\end{equation}
and
\begin{equation}
x:=\{\bar{e},(\vec{\mu},\vec{f}),A,\eta\}.
\end{equation}

Let $\langle M_\alpha:\alpha<\mu^+\rangle$ be a $\mu^+$-approximating sequence over $x$, and define
\begin{equation}
S:=\{\delta\in S:M_\delta\cap \mu^+=\delta\}.
\end{equation}

We claim that the stationary set $S$ satisfies the conclusion of the theorem. Thus,
given $\alpha^*<\beta^*$ in $S$ and $\delta<\cf(\mu)$, we must find $\alpha<\beta$ in $A$ such that $D(\alpha,\beta)=\langle \{\alpha^*,\beta^*\},\delta\rangle$. We will do this by striving for the following goal:

\medskip

\noindent{\sf Goal:} Find $\alpha<\beta$ in $A$ such that
\begin{enumerate}
\item $\alpha^*<\alpha<\beta^*<\beta$,
\sk
\item $\varphi_{k,m,\eta}(\beta^*,\beta)$,
\sk
\item $\varphi_{k,m,\eta}(\alpha^*,\alpha)$, and
\sk
\item $\iota(\Gamma(\alpha,\beta))=\langle k,m,\delta\rangle$.
\end{enumerate}

\begin{proposition}
If $\alpha$ and $\beta$ are as above, then $D(\alpha,\beta)=\langle\{\alpha^*,\beta^*\},\delta\rangle$.
\end{proposition}
\begin{proof}
By~(4), we know
\begin{gather}
k(\alpha,\beta)=k,\\
m(\alpha,\beta)=m,\\
\intertext{and}
\delta(\alpha,\beta)=\delta.
\end{gather}
Since $\varphi_{k,m,\eta}(\beta^*,\beta)$ holds and $\eta<\alpha<\beta^*$, an application of Lemma~\ref{varphilemma} tells us
\begin{equation}
\beta^*(\alpha,\beta)=\beta^m_k(\alpha,\beta)=\beta^*.
\end{equation}

The definition of $\eta^*$ together with the fact that $\varphi_{k,m,\eta}(\beta^*,\beta)$ holds informs us that
\begin{equation}
\eta^*(\alpha,\beta)=\eta^m(\beta^*,\beta)=\eta,
\end{equation}
and we can once again apply Lemma~\ref{varphilemma} (this time using $\varphi_{k,m,\eta}(\alpha^*,\alpha)$) to conclude that
\begin{equation}
\alpha^*(\alpha,\beta)=\beta^m_k(\eta+1,\alpha)=\alpha^*,
\end{equation}
as required.

\end{proof}

So how do we go about obtaining our goal?
We start by choosing $\beta\in A$ for which $\varphi_{k, m,\eta}(\beta^*,\beta)$ is true.  We set
\begin{equation}
y:=x\cup\{\alpha^*\},
\end{equation}
and define
\begin{equation}
\mathcal{M}:=\Sk_{\mathfrak{A}}(y\cup\cf(\mu)).
\end{equation}
Since $y\in M_{\alpha^*+1}$, the construction of $\mathcal{M}$ can be done in the model $M_{\alpha^*+2}$ by taking the Skolem hull of $y\cup\cf(\mu)$ in the model $M_{\alpha^*+1}$. Thus,
\begin{equation}
\mathcal{M}\in M_{\alpha^*+1}\subs M_{\beta^*}.
\end{equation}
From this it follows that
\begin{equation*}
\Ch_{\mathcal{M}}^{\vec{\mu}}\in M_{\beta^*}\cap\prod_{i<\cf(\mu)}\mu_i.
\end{equation*}
Since $\beta^*=M_{\beta^*}\cap\mu^+<\beta$ and $(\vec{\mu},\vec{f})$ is a scale,  we conclude that there is an $i_0<\cf(\mu)$ such that
\begin{equation}
\Ch_{\mathcal{M}}^{\vec{\mu}}(i)<f_\beta(i)\text{ whenever }i_0\leq i<\cf(\delta).
\end{equation}

Our next move is to note that since $I:=\{\alpha\in A:\varphi_{k, m,\eta}(\alpha^*,\alpha)\}$ is unbounded, the sequence $\langle f_\alpha:\alpha\in I\rangle$ together with $\vec{\mu}$ forms (modulo re-indexing) a scale for $\mu$. Thus we can apply Lemma~\ref{scalelemma} and fix a value $i_1$ such that whenever $i_1\leq i<\cf(\delta)$,
\begin{equation}
(\forall\zeta<\mu_i)(\exists^*\alpha\in A)[\varphi_{k, m,\eta}(\alpha^*,\alpha)\wedge \zeta<f_\alpha(i)].
\end{equation}
Given our choice of the function $\iota$, it follows that we can choose $i^*<\cf(\mu)$ such that
 $\max\{i_0, i_1\}<i^*$ and $\iota(i^*)=\langle k,m,\delta\rangle$.  In particular, for this choice of~$i^*$ we have
 \begin{gather}
 \label{condition1}
 \Ch^{\vec{\mu}}_{\mathcal{M}}\restr [i^*,\cf(\mu))<f_\beta\restr[i^*,\cf(\mu)),\\
 \label{condition2}
 (\forall\zeta<\mu_{i^*})(\exists^*\alpha\in A)[\varphi_{k_0, m_0}(\eta_0,\alpha^*,\alpha)\wedge \zeta<f_\alpha(i^*)],\\
 \intertext{and}
 \iota(i^*)=\langle k,m, \delta\rangle.
 \end{gather}

 We now define
 \begin{equation}
 \mathcal{N}=\Sk_{\mathfrak{A}}(\mathcal{M}\cup\mu_{i^*}).
 \end{equation}

 Notice that~(\ref{condition2}) holds in $\mathcal{N}$ as this model contains $i^*$ and all parameters relevant to the formula. We have also ensured that the ordinal $f_\beta(i^*)$ is in $\mathcal{N}$ too. Thus, we can find an ordinal $\alpha$ such that
 \begin{gather}
 \alpha\in\mathcal{N}\cap A,\\
 \varphi_{k_0, m_0}(\eta_0,\alpha^*,\alpha),\\
 \intertext{and}
 f_\beta(i^*)<f_\alpha(i^*).
 \end{gather}

We claim now that $\alpha$ and $\beta$ are as required. The following statements are immediate from our preceding work:
\begin{itemize}
\sk
\item $\alpha$ and $\beta$ are in $B$,
\sk
\item $\alpha^*<\alpha<\beta^*<\beta$,
\sk
\item $\varphi_{k, m,\eta}(\alpha^*,\alpha)$, and
\sk
\item $\varphi_{k, m,\eta}(\beta^*,\beta)$,
\sk
\end{itemize}
and so we will achieve our goal provided we can show $\Gamma(\alpha,\beta)=i^*$.

This, however, follows almost immediately by a standard argument.
Since $f_\beta(i^*)<f_\alpha(i^*)$, it is clear that $i^*\leq\Gamma(\alpha,\beta)$.  By Lemma~\ref{skolemhulllemma}, we know
\begin{equation}
\Ch^{\vec{\mu}}_{\mathcal{N}}\restr[i^*+1,\cf(\delta))=\Ch^{\vec{\mu}}_{\mathcal{M}}\restr[i^*+1,\cf(\delta)),
\end{equation}
and so~(\ref{condition1}) implies $\Gamma(\alpha,\beta)\leq i^*$ as well. Thus $\Gamma(\alpha,\beta)=i^*$, and we have achieved our goal. As noted before, this is enough to finish the proof of the theorem.

\end{proof}

\section{Conclusions}

In this last section we will deduce several results as corollaries of Theorem~\ref{theorem2}, including those results mentioned in our introduction.

\begin{proposition}
\label{cor1}
Suppose $\mu$ is a singular cardinal, and let $\langle\theta_i:i<\cf(\mu)\rangle$ be a sequence
of cardinals with supremum~$\theta^*$.  If $\mu^+\nrightarrow[\mu^+]^2_{\theta_i}$ for each $i$, then
\begin{equation}
\mu^+\nrightarrow [\mu^+]^2_{\theta^*}.
\end{equation}
\end{proposition}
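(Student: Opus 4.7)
The plan is to patch together the given colorings $c_i:[\mu^+]^2\to\theta_i$ (for $i<\cf(\mu)$) witnessing $\mu^+\nrightarrow[\mu^+]^2_{\theta_i}$ using the function $D$ from the Main Theorem. Specifically, I would write $D(\alpha,\beta)=\langle\{\alpha^*(\alpha,\beta),\beta^*(\alpha,\beta)\},\delta(\alpha,\beta)\rangle$ and define
\begin{equation*}
c^*(\alpha,\beta):=c_{\delta(\alpha,\beta)}\bigl(\alpha^*(\alpha,\beta),\beta^*(\alpha,\beta)\bigr).
\end{equation*}
Since $c_{\delta(\alpha,\beta)}$ takes values in $\theta_{\delta(\alpha,\beta)}\leq\theta^*$, this gives a well-defined $c^*:[\mu^+]^2\to\theta^*$. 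The claim is that $c^*$ witnesses $\mu^+\nrightarrow[\mu^+]^2_{\theta^*}$.

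To verify this, fix an unbounded $A\subseteq\mu^+$ and an arbitrary target color $\varsigma<\theta^*$. Because $\theta^*=\sup_{i<\cf(\mu)}\theta_i$, we may pick $i<\cf(\mu)$ with $\varsigma<\theta_i$. Apply the Main Theorem to $A$ to obtain a stationary set $S\subseteq\mu^+$ satisfying
\begin{equation*}
[S]^2\times\cf(\mu)\subseteq\ran(D\restr[A]^2).
\end{equation*}

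Since $S$ is stationary, $|S|=\mu^+$, so the hypothesis that $c_i$ witnesses $\mu^+\nrightarrow[\mu^+]^2_{\theta_i}$ yields $\alpha^{\star}<\beta^{\star}$ in $S$ with $c_i(\alpha^{\star},\beta^{\star})=\varsigma$. Now $\langle\{\alpha^{\star},\beta^{\star}\},i\rangle$ is an element of $[S]^2\times\cf(\mu)$, hence of $\ran(D\restr[A]^2)$, so there exist $\alpha<\beta$ in $A$ with $D(\alpha,\beta)=\langle\{\alpha^{\star},\beta^{\star}\},i\rangle$. Unpacking the definition, $\{\alpha^*(\alpha,\beta),\beta^*(\alpha,\beta)\}=\{\alpha^{\star},\beta^{\star}\}$ and $\delta(\alpha,\beta)=i$, so
\begin{equation*}
c^*(\alpha,\beta)=c_i(\alpha^{\star},\beta^{\star})=\varsigma,
\end{equation*}
as required.

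There is really no hard step here: the Main Theorem has been engineered precisely to make this corollary a bookkeeping exercise. The only point one has to be mildly careful about is that $S$ is produced as a \emph{stationary} set, which is why it has cardinality $\mu^+$ and therefore supports all $\theta_i$ colors under $c_i$; and that each $\theta_i\leq\theta^*$ so that $c^*$ genuinely lands in $\theta^*$.
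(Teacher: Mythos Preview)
Your proof is correct and follows essentially the same route as the paper's: both define the new coloring by $c_{\delta(\alpha,\beta)}(\alpha^*(\alpha,\beta),\beta^*(\alpha,\beta))$ and verify it works by choosing an index $i$ with $\varsigma<\theta_i$, applying the Main Theorem to get the stationary $S$, and then using that $c_i$ hits $\varsigma$ on $S$. Your explicit remark that stationarity of $S$ gives $|S|=\mu^+$ is exactly the point the paper leaves implicit.
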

\begin{proof}
For each $\delta<\cf(\mu)$, let $c_\delta:[\mu^+]^2\rightarrow\theta_\delta$ witness $\mu^+\nrightarrow[\mu^+]^2_{\theta_\delta}$. Using the notation of the preceding section, we define a coloring
$d:[\mu^+]^2\rightarrow\theta^*$ by
\begin{equation}
d(\alpha,\beta)=c_{\delta(\alpha,\beta)}(\alpha^*(\alpha,\beta),\beta^*(\alpha,\beta)).
\end{equation}

Given an unbounded $A\subs\mu^+$, let $S$ be the stationary set provided by Theorem~\ref{theorem2}. Given $\varsigma<\theta^*$, we choose $\delta$ such that $\varsigma<\theta_\delta$, and then select $\alpha^*<\beta^*$ in $S$ for which
\begin{equation}
c_{\delta}(\alpha^*,\beta^*)=\varsigma.
\end{equation}
By Theorem~\ref{theorem2}, there are $\alpha<\beta$ in $A$ for which
\begin{equation}
D(\alpha,\beta)=\langle \{\alpha^*,\beta^*\},\delta\rangle,
\end{equation}
and so
\begin{equation}
d(\alpha,\beta)=c_{\delta(\alpha,\beta)}(\alpha^*(\alpha,\beta),\beta^*(\alpha,\beta))=c_\delta(\alpha^*,\beta^*)=\varsigma,
\end{equation}
as required.
\end{proof}

Note that the preceding proof did not use the full strength of $\mu^+\nrightarrow[\mu^+]^2_{\theta_\delta}$, but see Proposition~\ref{finalprop} below.

\begin{corollary}
Let $\mu$ be a singular cardinal, and let $\theta$ be the least cardinal for which $\mu\rightarrow[\mu^+]^2_\theta$.
Then $\cf(\mu)<\cf(\theta)$.
\end{corollary}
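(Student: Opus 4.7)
The plan is to deduce this as a direct consequence of Proposition~\ref{cor1} via a contradiction argument. Suppose, toward a contradiction, that $\cf(\theta) \leq \cf(\mu)$. The aim is to fabricate from this assumption a sequence of cardinals that invokes Proposition~\ref{cor1} in a way that contradicts the minimality of $\theta$.

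First, I would fix a cofinal sequence $\langle \lambda_j : j < \cf(\theta)\rangle$ in $\theta$ consisting of cardinals strictly less than $\theta$. If $\cf(\theta) = \cf(\mu)$, take $\theta_i = \lambda_i$ directly; if $\cf(\theta) < \cf(\mu)$, form a $\cf(\mu)$-sequence $\langle \theta_i : i < \cf(\mu)\rangle$ by re-indexing (for instance, setting $\theta_i = \lambda_{j(i)}$ for some cofinal map $j: \cf(\mu) \to \cf(\theta)$, padding as needed). In either case we obtain a sequence $\langle \theta_i : i < \cf(\mu)\rangle$ of cardinals with $\theta_i < \theta$ for every $i$ and $\sup_{i<\cf(\mu)} \theta_i = \theta$.

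Since $\theta$ was chosen as the least cardinal for which the positive partition relation $\mu^+ \rightarrow [\mu^+]^2_\theta$ holds, every $\theta_i < \theta$ must satisfy the negative relation $\mu^+ \nrightarrow [\mu^+]^2_{\theta_i}$. Now Proposition~\ref{cor1} applied to this sequence yields
\begin{equation*}
\mu^+ \nrightarrow [\mu^+]^2_{\theta^*},
\end{equation*}
where $\theta^* = \sup_i \theta_i = \theta$. This flatly contradicts the defining property of $\theta$, completing the contradiction and forcing $\cf(\mu) < \cf(\theta)$.

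There is essentially no obstacle beyond making sure the cofinal sequence has length exactly $\cf(\mu)$ so that Proposition~\ref{cor1} applies as stated; the well-definedness of $\theta$ is not in question since any coloring $F : [\mu^+]^2 \to \theta$ with $\theta > \mu^+$ is automatically non-surjective (as $|[\mu^+]^2| = \mu^+$), so the class of $\theta$ with $\mu^+ \rightarrow [\mu^+]^2_\theta$ is nonempty.
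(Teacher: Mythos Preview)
Your argument is correct and is exactly what the paper intends; the corollary is stated there without proof as an immediate consequence of Proposition~\ref{cor1}. The one implicit step worth making explicit is that your cofinal sequence of cardinals below~$\theta$ requires $\theta$ to be a limit cardinal, which is guaranteed under your hypothesis because Shelah's $\mu^+\nrightarrow[\mu^+]^2_{\cf(\mu)}$ (invoked earlier in the paper) forces $\theta>\cf(\mu)\geq\cf(\theta)$, so $\theta$ is singular.
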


The following proposition yields the result from the abstract as an immediate corollary.

\begin{proposition}
\label{maincor}
The following statements are equivalent for a singular cardinal~$\mu$:
\begin{enumerate}
\item $\mu^+\nrightarrow[\mu^+]^2_{\mu^+}$
\sk
\item $\mu^+\nrightarrow[\mu^+]^2_{\mu}$
\sk
\item $\mu^+\nrightarrow [\mu^+]^2_\theta$ for all $\theta<\mu$
\sk
\item $\mu^+\nrightarrow [\mu^+]^2_\theta$ for arbitrarily large $\theta<\mu$.
\sk
\end{enumerate}
\end{proposition}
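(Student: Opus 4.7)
The plan is to establish the cycle of implications (1) $\Rightarrow$ (2) $\Rightarrow$ (3) $\Rightarrow$ (4) $\Rightarrow$ (1), with the first three being elementary monotonicity observations and only the last one genuinely invoking the machinery developed in the body of the paper.

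For the easy direction, I would first record the general monotonicity fact that $\mu^+\nrightarrow[\mu^+]^2_\theta$ is preserved when one decreases $\theta$: given a witness $c:[\mu^+]^2\to\theta$ and any $\theta'\leq\theta$, the coloring $c'$ obtained from $c$ by replacing every value $\geq\theta'$ with $0$ still attains every color below $\theta'$ on every unbounded subset, because $c$ itself already attains every color below $\theta\supseteq\theta'$ there. This single observation takes care of (1) $\Rightarrow$ (2) (taking $\theta'=\mu$), of (2) $\Rightarrow$ (3) (letting $\theta'$ range over cardinals below $\mu$), and of (3) $\Rightarrow$ (4) (a trivial weakening of the quantifier).

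The real content is (4) $\Rightarrow$ (1). Assuming (4), I would fix an increasing sequence $\langle\theta_i:i<\cf(\mu)\rangle$ of cardinals cofinal in $\mu$ for which $\mu^+\nrightarrow[\mu^+]^2_{\theta_i}$ holds at each coordinate; such a sequence exists precisely because (4) provides witnesses at arbitrarily large $\theta<\mu$. The supremum of the sequence is $\mu$, so Proposition~\ref{cor1} immediately yields $\mu^+\nrightarrow[\mu^+]^2_\mu$, which is statement (2). A single appeal to Proposition~\ref{prop1} then upgrades (2) to (1) and closes the cycle.

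The only step that could reasonably be called the main obstacle is (4) $\Rightarrow$ (2), and that obstacle has already been surmounted: Proposition~\ref{cor1}, itself a corollary of Theorem~\ref{theorem2}, is exactly the ZFC tool needed to combine a cofinal family of negative partition relations into a single negative relation at the supremum. With that machinery available, the present proposition is essentially a bookkeeping exercise that strings Proposition~\ref{prop1} and Proposition~\ref{cor1} together around the trivial monotonicity observation.
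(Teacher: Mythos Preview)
Your proposal is correct and matches the paper's own proof essentially line for line: the paper also dispenses with the downward implications as trivial, invokes Proposition~\ref{cor1} to pass from (4) to (2), and cites Proposition~\ref{prop1} for the equivalence of (1) and (2). Your write-up is merely more explicit about the monotonicity step and about stringing the two cited propositions together.
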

\begin{proof}
The equivalence of (1) and (2) is given by Proposition~\ref{prop1}. Each statement on the list implies the next, we need
only establish that (4) implies (2), but this is immediate from Proposition~\ref{cor1}.

\end{proof}

Our final proposition also seems to be of interest.

\begin{proposition}
\label{finalprop}
Let $\mu$ be a singular cardinal, and let $\theta\leq\mu^+$. Then the following statements are equivalent:
\begin{enumerate}
\item $\mu^+\nrightarrow [\mu^+]^2_\theta$
\sk
\item There is a function $c:[\mu^+]^2\rightarrow\theta$ such that whenever $T\subs\mu^+$ is stationary and $\varsigma<\theta$,
there are $\alpha<\beta$ in $T$ with $c(\alpha,\beta)=\varsigma$.
\sk
\end{enumerate}
\end{proposition}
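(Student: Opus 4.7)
The plan is to use the Main Theorem (Theorem~\ref{theorem2}) to transfer a coloring with the ``stationary'' property (2) into one with the full property (1).

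The implication (1) $\Rightarrow$ (2) is trivial: any stationary $T\subs\mu^+$ has cardinality $\mu^+$, so a coloring witnessing $\mu^+\nrightarrow[\mu^+]^2_\theta$ automatically takes every color on $[T]^2$ for any stationary (in fact, any size-$\mu^+$) set $T$.

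For the substantive direction (2) $\Rightarrow$ (1), let $c:[\mu^+]^2\rightarrow\theta$ be a coloring witnessing (2), and let $D$ be the function from Theorem~\ref{theorem2}. Using the notation from the proof of Theorem~\ref{theorem2}, write $D(\alpha,\beta)=\langle\{\alpha^*(\alpha,\beta),\beta^*(\alpha,\beta)\},\delta(\alpha,\beta)\rangle$. I would define a new coloring $c':[\mu^+]^2\rightarrow\theta$ by
\begin{equation*}
c'(\alpha,\beta):=c(\alpha^*(\alpha,\beta),\beta^*(\alpha,\beta)).
\end{equation*}
Given an unbounded (equivalently, size-$\mu^+$) set $A\subs\mu^+$ and any target color $\varsigma<\theta$, apply Theorem~\ref{theorem2} to $A$ to obtain a stationary $S\subs\mu^+$ with $[S]^2\times\cf(\mu)\subs\ran(D\restr[A]^2)$. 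Apply hypothesis (2) to the stationary set $S$ and the color $\varsigma$ to produce $\alpha^*<\beta^*$ in $S$ with $c(\alpha^*,\beta^*)=\varsigma$. Fixing any $\delta<\cf(\mu)$ (for instance $\delta=0$), the Main Theorem guarantees some $\alpha<\beta$ in $A$ with $D(\alpha,\beta)=\langle\{\alpha^*,\beta^*\},\delta\rangle$, and then
\begin{equation*}
c'(\alpha,\beta)=c(\alpha^*,\beta^*)=\varsigma,
\end{equation*}
as required. Thus $c'$ witnesses (1).

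There is no real obstacle here beyond correctly invoking the Main Theorem: the entire heavy lifting (the minimal-walk machinery, club-guessing, and the scale argument) is packaged inside $D$, and the present proposition is essentially a direct corollary in the same spirit as Proposition~\ref{cor1}. The only thing to verify carefully is that the definitions of $\alpha^*(\alpha,\beta)$ and $\beta^*(\alpha,\beta)$ from the proof of Theorem~\ref{theorem2} are available for the construction of $c'$, which they are by the explicit formula for $D$.
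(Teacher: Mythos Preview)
Your argument is correct and essentially identical to the paper's: both define the new coloring by $d(\alpha,\beta)=c(\alpha^*(\alpha,\beta),\beta^*(\alpha,\beta))$, invoke Theorem~\ref{theorem2} to obtain a stationary $S$ from the unbounded $A$, use hypothesis~(2) to hit the target color on $[S]^2$, and then pull back via $D$. The only cosmetic difference is that you explicitly fix an irrelevant $\delta<\cf(\mu)$, which the paper leaves implicit.
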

\begin{proof}
It is clear that (1) implies (2), so let us assume that $c$ is as in (2).  Define the function $d:[\mu^+]^2\rightarrow\theta$
by
\begin{equation}
d(\alpha,\beta)=c(\alpha^*(\alpha,\beta),\beta^*(\alpha,\beta)).
\end{equation}
Suppose now that we are given an unbounded $A\subs\mu^+$ and $\varsigma<\theta$.  Let $S$ be the stationary set guaranteed
to exist by Theorem~\ref{theorem2}, and choose $\alpha^*<\beta^*$ in $S$ with $c(\alpha^*,\beta^*)=\varsigma$.
Then there are $\alpha<\beta$ in $A$ for which $\alpha^*(\alpha,\beta)=\alpha^*$ and $\beta^*(\alpha,\beta)=\beta^*$, and
\begin{equation}
d(\alpha,\beta)= c(\alpha^*(\alpha,\beta),\beta^*(\alpha,\beta))=c(\alpha^*,\beta^*)=\varsigma
\end{equation}
as required.
\end{proof}

\end{document}